\documentclass[12pt, reqno]{amsart}
\usepackage{amsmath}
\usepackage{amssymb}
\usepackage{amsfonts}
\usepackage{amsthm}
\usepackage{amscd}
\usepackage{relsize}
\usepackage{setspace}
\usepackage{geometry}
\usepackage{enumitem}
\usepackage{url}
\usepackage{xspace}
\usepackage{mathrsfs}
\usepackage{dsfont}
\usepackage{lmodern}
\usepackage{xcolor}
\usepackage[utf8]{inputenc}
\usepackage{mathtools}

\usepackage{hyperref}

\usepackage[textsize=footnotesize,textwidth=15ex]{todonotes}

\newtheorem{thm}{Theorem}[section]
\newtheorem{lem}[thm]{Lemma}
\newtheorem{prop}[thm]{Proposition}
\newtheorem{cor}[thm]{Corollary}

\theoremstyle{definition}
\newtheorem{remark}[thm]{Remark}

\numberwithin{equation}{section}

\newcommand{\ZZ}{\mathbf{Z}}

\DeclareMathOperator{\Aut}{Aut}

\DeclareMathOperator{\SL}{SL}

\begin{document}

\title[Ascending chains of irreducible lattices]{Ascending chains of irreducible lattices, bi-reversible automata and \\
affine arithmetic groups}

\author{Pierre-Emmanuel \textsc{Caprace}}
\author{Justin \textsc{Vast}}

\date{July 16, 2026}

\thanks{JV is a F.R.S.-FNRS Research Fellow; PEC and JV are supported in part by the FWO and the F.R.S.-FNRS under the EOS programme (project ID 40007542).}

\begin{abstract}
For each $n \geq 2$, we construct an ascending chain of irreducible lattices in the product of $n$ homogeneous trees. Moreover, for each pair of integers $m_1, m_2 \geq 1$, we define explicitly  a bi-reversible automaton $\mathcal B$ such that the group $G_{\mathcal B}$ defined by the automaton $\mathcal B$ has finiteness length $m_1$ (i.e. it is of type $\mathrm{F}_{m_1}$ but not of type $\mathrm{FP}_{m_1+1}$), and the group $G_{\mathcal B^*}$ defined by the dual automaton has finiteness length $m_2$. Both constructions rely on the consideration of $S$-arithmetic groups in the affine group of a global function field. 
\end{abstract}

\maketitle

%
%
%
%
%

\section{Introduction}

Let  $G$ be a unimodular locally compact group, with Haar measure $\mu$. We recall that, given a discrete subgroup $\Gamma \leq G$, there is a unique $G$-invariant measure $\nu$ on the coset space $G/\Gamma$ such that $\int_G f(g) d\mu(g) = \int_{G/\Gamma} \sum_{\gamma \in \Gamma} f(x\gamma) d\nu(x\Gamma)$ for all continuous compactly supported function $f$ on $G$. The measure $\nu(G/\Gamma)$ is then called the \textbf{covolume} of $\Gamma$ in $G$. If $\nu(G/\Gamma)$ is finite, the discrete subgroup $\Gamma$ is called  a \textbf{lattice}.  The set of covolumes of lattices have been studied in many classes of locally compact groups. We refer to Lubotzky's survey \cite{Lub} for references and a review of many other related results. Here, let us merely recall that, in semisimple Lie groups, covolumes of lattices are bounded away from zero by a classical result of Kazhdan--Margulis. A similar result holds for semisimple algebraic groups over local fields. By contrast, if $G$ is the automorphism group of a homogeneous locally finite tree of degree $k \geq 3$, Bass--Kulkarni \cite{BK90} have shown that $G$ contains strictly ascending chains of cocompact lattices. This implies that the set of covolumes of lattices has~$0$ as an accumulation point. This has been generalized to the isometry groups of right-angled buildings by A.~Thomas \cite{Tho06} (see also \cite[Corollary~7.3]{Hughes} for the case of the universal cover of Salvetti complexes). 

On the other hand, if $G = G_1 \times \dots \times G_n$ is a product of automorphism groups of     trees, it has been shown by various authors (see \cite{Gla03}, \cite{BM14}, \cite{CLB}) that, under suitable conditions, ascending chains of \textit{irreducible} lattices cannot exist (see also \cite[\S1.1]{CLB} for covolume bounds that  apply to an abstract framework of cocompact lattices in products of compactly generated almost simple groups). The notion of \textbf{irreducibility} means that the projection of the lattice $\Gamma \leq G$ to each factor $G_i$ is non-discrete.   However, the question whether there can exist ascending chains of irreducible lattices in products of trees had remained open. Our first main result answers this positively. We denote by $T_k$ the homogenous tree of degree~$k$. 

\begin{thm}[See Corollary~\ref{cor:asc-chains-trees:2}]\label{thm:exist-chains}
Let $n \geq 2$ be an integer and $q$ be a prime power. There is a sequence $(q_1 = q, q_2, \dots, q_n)$ of powers of $q$ such that the product $\Aut(T_{2q_1}) \times \dots \times \Aut(T_{2q_n})$ contains an infinite strictly ascending chain of cocompact vertex-transitive lattices with non-discrete projections on any sub-product. 
\end{thm}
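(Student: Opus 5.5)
The plan is to realize the lattices as $S$-arithmetic subgroups of the affine group $\mathrm{Aff}_1 = \mathbb{G}_a \rtimes \mathbb{G}_m$ over the global function field $K = \mathbf{F}_q(t)$. The key point is that $\mathrm{Aff}_1(K_v) = K_v \rtimes K_v^\times$ acts properly and vertex-transitively on the Bruhat--Tits tree of $\SL_2(K_v)$ restricted to a horoball structure. More concretely, it acts simply transitively on vertices of the tree $T_{q_v+1}$ via the Borel subgroup, up to the compact group $\mathcal{O}_v^\times$. To get degree $2q_v$, we instead use the action on a suitable tree of degree $2q_v$. An alternative is to use a group of the form $\mathcal{O}_v \rtimes \langle \pi_v, -1\rangle$ acting with the horocyclic structure: each vertex has $q_v$ ``children'' and, after doubling by the sign, $2q_v$ neighbours. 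I would fix a set $S = \{v_1,\dots,v_n\}$ of places of degrees $d_i$, so that $q_i = q^{d_i}$, with $v_1 = \infty$ of degree $1$. Let $R = \mathcal{O}_S$ be the $S$-integers, and set $\Gamma = R \rtimes U$, where $U \le R^\times$ is a subgroup of the $S$-units. By Dirichlet's $S$-unit theorem, $R^\times/\mathbf{F}_q^\times \cong \ZZ^{n-1}$.

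The first step is to show that $\Gamma = R \rtimes U$ embeds diagonally as a discrete cocompact subgroup of $\prod_i \mathrm{Aff}_1(K_{v_i})$ modulo compact kernels. This holds because $R$ is a cocompact lattice in $\prod K_{v_i}$ and $U$ is a lattice in $\prod K_{v_i}^\times$ modulo $\prod \mathcal{O}_{v_i}^\times$. Consequently, $\Gamma$ maps to a cocompact lattice in $\prod \Aut(T_{2q_i})$. I would then check vertex-transitivity by choosing the unit group so that it realizes the needed translations on the product of horocyclic levels. Irreducibility on every subproduct follows because the projection of $R$ to any proper subproduct of places is dense: by strong approximation for $\mathbb{G}_a$, omitting a place gives density in the remaining completions.

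The second step builds the ascending chain. I would take $\Gamma_k = \pi^{-k} R' \rtimes U$, or equivalently conjugate $R\rtimes U$ by an element $g=(a,1,\dots,1)$ with $a\in K_{v_1}^\times$ that is not realized by any $S$-unit. A cleaner version uses the elements $s^{-k}R$ for a non-unit $s\in R$ supported at a place outside $S$. The groups $s^{-k}R$ form a strictly ascending chain of $U$-invariant subgroups of $K$. Each $R[1/s]$-stage, however, adds a place, which would break discreteness. So instead I would exploit the freedom in $U$: pass from a proper finite-index subgroup $U_k$ to a larger one. That alone is not an ascending chain with infinitely many steps, so the right mechanism must be an ascending chain of $U$-stable lattices $M_0 \subsetneq M_1 \subsetneq \cdots$ in the vector space $\prod K_{v_i}$. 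These should be obtained by conjugating $R$ by a diagonal element $g$ of the product that normalizes the closure but not $R$, with $gRg^{-1} \supsetneq R$ and $U$-stable.

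The main obstacle is exactly this: finding $g\in \prod_i \Aut(T_{2q_i})$ such that $\Gamma \subsetneq g\Gamma g^{-1}$. Covolumes then shrink by a fixed factor, and iterating gives $\Gamma \subsetneq g\Gamma g^{-1}\subsetneq g^2\Gamma g^{-2}\subsetneq\cdots$. I would try $g$ acting as multiplication by $\pi_{v_1}^{-1}$ on the first factor only. Then $g(R\rtimes U)g^{-1} = \pi_{v_1}^{-1}$-scaled $R$ in the first coordinate, which contains $R$ if the scaling is chosen inside $R$ in the other coordinates. Making this compatible requires choosing the $q_i$ (the degrees $d_i$) so that a suitable element of $K$ has the right valuations. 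This is where the sequence $q_1=q, q_2,\dots$ of powers of $q$ enters. The remaining properties (vertex-transitivity, cocompactness, and non-discreteness on subproducts) persist under conjugation, which completes the argument.
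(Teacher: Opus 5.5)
There are two genuine gaps: the lattice in your first step is not cocompact in the product, and the chain mechanism you settle on is impossible in principle.

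\textbf{First step.} As stated it is false, and not for a technical reason. With one place $v_i$ per tree factor, the $S$-units modulo $\mathbf F_q^*$ have rank $n-1$, whereas $\prod_i K_{v_i}^\times/\prod_i\mathcal O_{v_i}^\times\cong\ZZ^n$. By the product formula $U$ lies in the norm-one group $K_S^1$, so it is not cocompact in $\prod_iK_{v_i}^\times$. Concretely, $R\rtimes U$ preserves the weighted sum $\sum_i\deg(v_i)\mathfrak h_i$ of the Busemann functions of the fixed ends, so it has infinitely many vertex orbits on $\prod_i T_{q_i+1}$. These Bruhat--Tits trees also have degree $q_i+1$, and no ``doubling by the sign'' turns them into $T_{2q_i}$.

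The missing idea is to attach to each tree factor a \emph{pair} $S_i$ of places of equal degree, with $S=\bigsqcup_i S_i$. The multiplicative part $\mathcal O_{S_1}^*\cdots\mathcal O_{S_n}^*$ then has rank $\sum_i(|S_i|-1)$, matching $\prod_iK^1_{S_i}$, in which it is cocompact. So $\Gamma_0=\mathcal O_S\rtimes(\mathcal O_{S_1}^*\cdots\mathcal O_{S_n}^*)$ is a cocompact lattice in $\prod_iG_{S_i}$. Each $G_{S_i}$ acts on the Diestel--Leader graph $\mathsf{DL}_2(q_i)$, the horocyclic product of two $(q_i+1)$-regular trees, which is $2q_i$-regular. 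The tree $T_{2q_i}$ appears as its universal cover, and the lattices are lifted as extensions by $\prod_i\pi_1(\mathsf{DL}_2(q_i))$. Vertex-transitivity also needs an actual argument rather than a ``choice of $U$''. For $K=\mathbf F_q(t)$, class number one gives $\mathcal O_S+\prod_{v\in S}\mathcal R_v=A_S$ and $K^1_{S_i}=\mathcal O_{S_i}^*\prod_{v\in S_i}\mathcal R_v^*$, and these produce a vertex-regular sublattice of $\Gamma_0$.

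\textbf{The ascending chain.} The group $\prod_i\Aut(T_{2q_i})$ contains lattices, hence is unimodular, so conjugation by any $g$ in it preserves covolume. A lattice can therefore never be properly contained in one of its conjugates there, and the element $g$ you are looking for does not exist. Concretely, for $g=(a,1,\dots,1)$, the map $R\to\prod_{i\geq 2}K_{v_i}$ is injective, so $R\subseteq gRg^{-1}$ forces $a=1$.

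You had the right idea and discarded it for a wrong reason. For a non-unit $s\in R$, each $s^{-k}R$ is the image of $R$ under the topological automorphism of $\prod_{v\in S}K_v$ given by multiplication by $s^{-k}$. Hence it is a discrete cocompact subgroup, it is $U$-stable, and $s^{-k}R\subsetneq s^{-k-1}R$. Only the union $R[1/s]$ fails to be discrete, and the chain never needs that union.

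The paper does essentially this. It takes $\mathcal O_{S,k}=\{x\in\mathcal O_{S\cup\{v_0\}}\mid v_0(x)\geq -k\}$ for an auxiliary place $v_0\notin S$. This contains $\mathcal O_S$ with index $|\bar K_{v_0}|^k$, and it is stable under $\mathcal O_S^*$ because units satisfy $v_0(x)=0$. It then sets $\Gamma_k=\mathcal O_{S,k}\rtimes(\mathcal O_{S_1}^*\cdots\mathcal O_{S_n}^*)$. Non-discreteness of projections and vertex-transitivity pass from $\Gamma_0$ to $\Gamma_k$ simply because $\Gamma_0\leq\Gamma_k$.

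This is consistent with the unimodularity obstruction. Multiplication by $s^{-k}$ relates $s^{-k}R\rtimes U$ to $R\rtimes U$, but it violates the product-formula constraint. So it lies in the non-unimodular group $\prod_{v\in S}\mathrm{AGL}_1(K_v)$, not in the unimodular group containing the lattices.
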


It has been conjectured by Y.~Glasner \cite[Conj.~1.5]{Gla03} that there is a positive lower bound on the covolume of irreducible lattices  in $\mathrm{Aut}(T_1) \times \mathrm{Aut}(T_2)$ with primitive local actions. This was confirmed for vertex-transitive lattices with $2$-transitive local actions, see  \cite[Th.~E]{CLB}. Theorem~\ref{thm:exist-chains} shows that a lower bound on the covolume does not exist if one removes any hypothesis on the local actions. 

Our construction relies on considering $S$-arithmetic lattices in affine groups over a global function field $K$. By choosing a suitable set $S$ that is the disjoint union of $n$ finite sets $S_1, \dots, S_n$ of at least two places of $K$, we obtain a normal subgroup $\Lambda_1$ of the affine group $\mathcal O_S \rtimes \mathcal O_S^*$ that embeds an irreducible cocompact lattice in the product $H= \mathrm{Aut}(L_1) \times \dots \times \mathrm{Aut}(L_n)$ of automorphism groups of homogeneous graphs that are \textit{Diestel--Leader graphs} (see \cite{BNW08} or Section~\ref{sec:DL} below for the definition). Using a place that does not belong to $S$, we  build a strictly ascending chain of lattices $\Lambda_1 < \Lambda_2 < \dots$ in the same group $H$. We then replace the graph $L_i$ by its universal covering tree $T_i$. The chain $\Gamma_1 < \Gamma_2 < \dots$ is obtained by lifting $\Lambda_1 < \Lambda_2 < \dots$ up to the universal cover. 

In the case of the global field $K = F(t)$, where $F$ is a finite field, we may arrange that $\Lambda_1$ acts \textbf{regularly} (i.e. sharply transitively)  on the vertices of the product $L_1 \times \dots \times L_n$. Moreover, the subgroup fixing a vertex in all but one factors is commensurable to the affine arithmetic group $\mathcal O_{S_i} \rtimes \mathcal O_{S_i}^*$. The homological finiteness properties of those groups have been studied by K.-U. Bux in \cite{Bux, Bux04}, who showed that $\mathcal O_{S_i} \rtimes \mathcal O_{S_i}^*$ has property $\mathrm{F}_{|S_i|-1}$, but not property $\mathrm{FP}_{|S_i|}$. Putting $n=2$ and using the relation between lattices in products of two trees and bi-reversible automata established by Glasner--Mozes~\cite{GM05}, we obtain the following (see \cite{GM05, Nekra} and \S\ref{sec:automata} below for the definitions of automata and the groups associated with them). 

\begin{thm}[See Theorem~\ref{thm:bi-rev-autom:full}]\label{thm:bi-rev-autom}
Let $m_1, m_2 \geq 1$ be integers. There exists a bi-reversible automaton   $\mathcal B$ such that the automaton group $G_{\mathcal B}$   is of type $\mathrm{F}_{m_1}$ but not of type $\mathrm{FP}_{m_1+1}$, and the group $G_{\mathcal B^*}$ defined by the dual automaton is of type $\mathrm{F}_{m_2}$ but not $\mathrm{FP}_{m_2+1}$. 
\end{thm}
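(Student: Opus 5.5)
The plan is to realize $\mathcal B$ from a lattice acting regularly on a product of two graphs, via the Glasner--Mozes correspondence. Take $K = F(t)$ with $F$ a finite field of order $q$. Choose two disjoint finite sets of places $S_1, S_2$ with $|S_1| = m_1+1$ and $|S_2| = m_2+1$, and put $S = S_1 \sqcup S_2$. Consider the normal subgroup $\Lambda_1$ of $\mathcal O_S \rtimes \mathcal O_S^*$ constructed above. It acts regularly on the vertices of $L_1 \times L_2$, where $L_1, L_2$ are Diestel--Leader graphs attached to $S_1$ and $S_2$. Each $L_i$ has a natural orientation and labelling coming from the valuations at the places in $S_i$.

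The first step is to pass to the universal covering trees $T_1, T_2$. Lifting $\Lambda_1$ gives a lattice $\Gamma \le \Aut(T_1)\times\Aut(T_2)$ that acts simply transitively on the vertices of $T_1 \times T_2$. Glasner--Mozes \cite{GM05} associate to such a lattice a square complex with a single vertex, i.e. a $VH$-structure. This yields a bi-reversible automaton $\mathcal B$ whose states are labelled by the edges of one factor and whose alphabet is labelled by the edges of the other. They also show that $G_{\mathcal B}$ and $G_{\mathcal B^*}$ are the images of $\Gamma$ in $\Aut(T_1)$ and $\Aut(T_2)$, respectively, restricted to the action on spheres. More precisely, $G_{\mathcal B}$ is the quotient of $\Gamma$ acting faithfully on the rooted tree of the other factor, which amounts to the projection of the vertex stabilizer $\Gamma_{v_2}$ of a vertex of $T_2$ acting on $T_1$.

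The second step is to identify these automaton groups with arithmetic groups up to commensurability and finite kernels. The paper states that the subgroup of $\Lambda_1$ fixing a vertex in one factor is commensurable to $\mathcal O_{S_i} \rtimes \mathcal O_{S_i}^*$. After lifting, the stabilizer of a vertex of $T_2$ in $\Gamma$ maps onto $G_{\mathcal B}$. So I must check two things: that this map factors through the corresponding stabilizer in $\Lambda_1$, and that its kernel is finite or trivial. The action on $T_1$ then factors through the action on $L_1$, and faithfulness can be read off from faithfulness of the affine action on $L_1$. The payoff is that $G_{\mathcal B}$ becomes commensurable, up to finite kernels, to $\mathcal O_{S_1} \rtimes \mathcal O_{S_1}^*$, and symmetrically $G_{\mathcal B^*}$ to the group attached to $S_2$.

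Finally, I apply Bux's theorem. The group $\mathcal O_{S_i} \rtimes \mathcal O_{S_i}^*$ is $\mathrm F_{|S_i|-1}$ but not $\mathrm{FP}_{|S_i|}$, and both properties are invariant under commensurability and under finite kernels. Since $|S_1|-1 = m_1$ and $|S_2|-1 = m_2$, this gives exactly the finiteness lengths claimed.

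The main obstacle is the second step: pinning down precisely that the Glasner--Mozes automaton group equals, and not merely surjects from, the relevant arithmetic stabilizer modulo finite data. The danger is that passing from $L_i$ to its covering tree $T_i$ could introduce a large kernel or an infinite-index discrepancy. To handle it, I would first show directly that the stabilizer acts on $L_1$ with trivial kernel, using that nontrivial affine maps move points. I would then argue that the action on the covering tree $T_1$ of the fixed vertex's link structure is determined by the action on $L_1$ together with the deck group. The stabilizer meets the deck group trivially because $\Lambda_1$ acts freely.
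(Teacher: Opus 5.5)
\textbf{There is a genuine gap.} Your overall route is the paper's: the vertex-regular lattice, then the Glasner--Mozes one-vertex square complex, then Bux. But the step you yourself flag as the crux is set up on the wrong factor, and the decisive input is missing.

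\emph{Which action is the automaton group.} The stabilizer $\Gamma_{v_2}$ of a vertex $v_2\in T_2$ is the subgroup generated by the edges in the $T_1$-direction. It acts simply transitively on the vertices of $T_1\times\{v_2\}$, so its image in $\Aut(T_1)$ is just $\Gamma_{v_2}$ itself, a free group on the states. The automaton group is instead the action of $\Gamma_{v_2}$ on the rooted tree $(T_2,v_2)$, i.e.\ the image of $\Gamma_{v_2}$ in $\Aut(T_2)$.

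\emph{Consequences for your intermediate claims.} Several of them fail as stated. The action of $\Gamma_{v_2}$ on $T_1$ does not factor through $L_1$: $\Gamma_{v_2}$ contains the deck group $\pi_1(L_1)$, which acts nontrivially on $T_1$. For the same reason $\Gamma_{v_2}$ does not meet that deck group trivially; it is $\pi_1(L_2)$ that it meets trivially, because $\pi_1(L_2)$ acts freely on $T_2$. Finally, faithfulness of the stabilizer on $L_1$ is automatic, since it acts regularly there, so it cannot be what drives the result. For a reducible lattice $\Lambda'\times\Lambda''$ this check also succeeds, yet the stabilizer $\Lambda'\times\{1\}$ acts trivially on $L_2$ and the automaton group is trivial.

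\emph{What the argument needs.} Since $\pi_1(L_1)\le\Gamma_{v_2}$ acts trivially on $T_2$, and $\Gamma_{v_2}\cap(\pi_1(L_1)\times\pi_1(L_2))=\pi_1(L_1)$, the map $\Gamma_{v_2}\to\Aut(T_2)$ factors through $\Gamma_{v_2}/\pi_1(L_1)$. This quotient is the stabilizer of $\bar v_2$ in $\Lambda_{S_1,S_2}$, which is $\Lambda_{S_1}$ by Proposition~\ref{prop:regular-action-prod}(iii). Moreover, an element of $\Gamma_{v_2}$ acts trivially on $T_2$ if and only if its image acts trivially on $L_2$, because a lift of the identity fixing $v_2$ is the identity. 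Hence $G_{\mathcal B}$ is the image of $\Lambda_{S_1}$ in $\Aut(L_2)$.

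So you must show that $\Lambda_{S_1}$ acts faithfully on $L_2=\mathsf{DL}_{m_2+1}(q)$ through its $S_2$-adic embedding, where it lies in the compact open subgroup $U_{S_2}$. This is exactly the irreducibility input. It follows from injectivity of the projection of $\Lambda_{S_1,S_2}$ to $G_{S_2}$ (Theorem~\ref{thm:affine-arith:prod}(ii)) together with faithfulness of $G_{S_2}$ on the Diestel--Leader graph (Lemma~\ref{lem:DL}). Your idea that nontrivial affine maps move points is the right one, but it has to be applied to the $S_2$-completions and to $L_2$, not to $L_1$.

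\emph{Smaller omissions.} To invoke Proposition~\ref{prop:regular-action-prod} you must take the places of $S$ of a common degree and a degree-one $v_0\notin S$. For instance $S_i=\{v_{t+a}\mid a\in F_i\}$ and $v_0=v_\infty$, which requires $q\ge m_1+m_2+2$. You should also check that no edge-generator is an involution, so that the quotient really is a one-vertex VH-complex.
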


For each prime power $q \geq m_1+m_2+2$, we   describe explicitly a bi-reversible automaton  $\mathcal B= \mathcal B_{m_1, m_2; q}$ satisfying the conclusions of Theorem~\ref{thm:bi-rev-autom}, see  \S\ref{sec:automata}. That automaton $\mathcal B_{m_1, m_2; q}$ has a set of states of size $\frac{m_1(m_1+1)} 2 q$, and an alphabet of size $\frac{m_2(m_2+1)} 2 q$. 
	
Little is known about the class of groups that can be defined by a bi-reversible automaton. The first example of a bi-reversible automaton $\mathcal A$ such that $G_{\mathcal A}$ is finitely generated but not finitely presented was obtained by Bondarenko--D'Angeli--Rodaro \cite{BDR}, who constructed explicitly a bi-reversible automaton $\mathcal A$ such that $G_{\mathcal A}$ is the lamplighter group $C_3 \wr \ZZ$.  It was shown that Skipper--Steinberg \cite{SkSt20} and Francoeur~\cite{Fra23} that for each finite abelian group $A$, the wreath product $A \wr \ZZ$ can be defined by a bi-reversible automaton. Our interest in those bi-reversible automata and the associated lattices grew out of the discovery of their fractal properties, described and studied in \cite{CJ}. 

Let us also mention   that free groups can be defined by bi-reversible automata; it is also the case of some groups with Kazhdan's property (T), see \cite{GM05}. Theorem~\ref{thm:bi-rev-autom} adds new specimens to that class. 

\subsection*{Acknowledgements}

We thank Stefan Witzel for drawing our attention to the work of K.-U. Bux~\cite{Bux04} on the finiteness properties of solvable $S$-arithmetic groups.  We also thank Adrien Le Boudec for his comments on an earlier version of this paper.
%
%

\section{Additive and multiplicative $S$-arithmetic groups}

We start by reviewing a few facts concerning the algebraic number theory of global function fields. We refer to the books \cite{Rosen, Weil} for more details. 

Let $K$ be a global function field of characteristic $p$, and $F$ be its field of constants. We denote by $\mathscr P$ the set of all places of $K$, that we view as normalized discrete valuations $v \colon K \to \ZZ \cup\{\infty\}$.

Given a valuation $v \in \mathscr P$, we denote by $K_v$ the corresponding local field,  by $\mathcal R_v  = \{x \in K_v \mid v(x) \geq 0\}$ its valuation ring and by $\bar K_v$ its residue field. 

We define the \textbf{degree} $ \deg(v)$ of the valuation $v$ of $K$ by setting  $\deg(v) = [\bar K_v : F]$. 

Let  $S \subset \mathscr P$ be a non-empty finite set of places of $K$. We set 
$$\mathcal O_S = \{x  \in K \mid v(x) \geq 0 \text{ for all valuations } v \not \in S\}.$$
 
\begin{prop}\label{prop:additive-lattice}
For any (possibly empty) set of valuations $S'$ disjoint from $S$, the diagonal map
$$\mathcal O_S \to A = \prod_{v \in S} K_v \times \prod_{v' \in S'} \mathcal R_{v'}$$
embeds $\mathcal O_S$ as a cocompact lattice in the additive group $A$. 

Moreover, for any proper subset $S_0 \subset S$, the image of  $\mathcal O_S$ in the subproduct $\prod_{v \in S_0} K_v \times \prod_{v' \in S'} \mathcal R_{v'}$ is dense.

If in addition $K = F(t)$, then we have 
$$\mathcal O_S \cap R = F \qquad \text{and} \qquad  \mathcal O_S + R = A,$$ 
where $R = \prod_{v \in S \cup S'} \mathcal R_v$. 
\end{prop}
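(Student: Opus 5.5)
The plan is to deduce everything from the strong approximation theorem for the additive group (equivalently, the Riemann--Roch theorem) over the global function field $K$, together with the standard fact that $K$ embeds as a discrete cocompact subgroup of its adele ring $\mathbb A_K$.

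\textbf{Discreteness.} I will first check that the image of $\mathcal O_S$ in $A$ is discrete. Take $x \in \mathcal O_S$ whose image lies in the open neighbourhood $\prod_{v\in S}\mathcal R_v \times \prod_{v'\in S'}\mathcal R_{v'}$ of $0$. Then $v(x)\ge 0$ for every place $v$ of $K$. Hence $x$ is a constant, i.e. $x \in F$, which is finite. So the intersection of $\mathcal O_S$ with this open set is finite, and $\mathcal O_S$ is discrete in $A$. Injectivity of the diagonal map is clear.

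\textbf{Cocompactness.} For this I will use that $\mathbb A_K = K + \big(\prod_{v\in S}K_v\times\prod_{v\notin S}\mathcal R_v\big)$ up to a compact set. Equivalently, $\mathbb A_K/K$ is compact and the quotient $\mathbb A_K/(K+\prod_{v}\mathcal R_v)$ is finite, since it is $H^1$ of the structure sheaf. Concretely, I would invoke Riemann--Roch as follows. Choose a divisor $D$ supported on $S$ of degree exceeding $2g-2$. Then for every adele $a$ there is some $x\in K$ with $v(a_v - x)\ge -D_v$ for all $v$. Given $a \in A$, extend it by $0$ at the places outside $S\cup S'$ and apply this with $D$ supported on $S$. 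The resulting $x$ satisfies $v(x)\ge0$ for $v\notin S$, so $x\in\mathcal O_S$. Moreover $a-x$ lies in the compact set $\prod_{v\in S}\pi_v^{-D_v}\mathcal R_v\times\prod_{S'}\mathcal R_{v'}$. This gives cocompactness.

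\textbf{Density.} For a proper subset $S_0\subsetneq S$, pick $w\in S\setminus S_0$. The claim then becomes the strong approximation theorem with respect to the place $w$. Given a target in $\prod_{S_0}K_v\times\prod_{S'}\mathcal R_{v'}$ up to a small open neighbourhood, I need $x\in K$ with three properties: it is close to the targets at $S_0\cup S'$, it is integral outside $S\cup S'$, and it is unconstrained at $w$. I would prove this by Riemann--Roch applied to a divisor $D+mw$ with $m\gg0$. The dimension count $\ell(D+mw)-\ell(D+mw-E)=\deg E$, for $m$ large, shows that the map to $\prod \mathcal R_v/\pi_v^{e_v}$ is surjective. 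This is the only step requiring care, but it is standard (see \cite{Weil}).

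\textbf{The case $K=F(t)$.} Here $g=0$. The identity $\mathcal O_S\cap R=F$ follows from the discreteness argument above: an element integral at every place is a constant. For $\mathcal O_S+R=A$, rerun the cocompactness argument with $D=0$. For genus $0$, $H^1(\mathcal O)=0$, i.e. $\mathbb A_K=K+\prod_v\mathcal R_v$. Equivalently, use partial fractions: for each $v\in S$, subtract the principal part of $a_v$ at $v$, written as an element of $K$ integral away from $v$. This works because the place at infinity or the other places can absorb poles, since $\deg$ of the polar divisor is nonnegative and $\ell(D)=\deg D+1$ for $\deg D\ge0$. The resulting $x\in\mathcal O_S$ has $a-x\in R$.

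I expect the main obstacle to be bookkeeping the places in $S'$, which may be infinite in number. This is handled by the fact that the approximation conditions impose only finitely many constraints at a time, and closeness in the product topology only involves finitely many coordinates.
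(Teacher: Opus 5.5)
Your proof is correct, but it takes a different route from the paper. The paper's proof consists only of citations to Weil (Ch.~IV, \S2: Theorem~2, Corollary~2 and Lemma~4), that is, to the standard adelic facts (discreteness and cocompactness of $K$ in its adeles, strong approximation) plus a lemma for the rational field.

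You instead derive every assertion from Riemann--Roch for $K$:
\begin{itemize}
\item discreteness and $\mathcal O_S\cap R=F$ come from ``no poles implies constant'';
\item cocompactness comes from $H^1(D)=0$ for a divisor $D$ supported on $S$ with $\deg D>2g-2$;
\item the equality $\mathcal O_S+R=A$ comes from the same vanishing with $D=0$ when $g=0$;
\item density comes from the count $\ell(D+mw)-\ell(D+mw-E)=\deg E$.
\end{itemize}
Your route gives an explicit compact fundamental set $\prod_{v\in S}\pi_v^{-D_v}\mathcal R_v\times\prod_{v'\in S'}\mathcal R_{v'}$. It also shows that the hypothesis $K=F(t)$ is used only through $g=0$. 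Indeed, your tools give $A/(\mathcal O_S+R)\cong\mathbb A_K/(K+\prod_v\mathcal R_v)$, which has $F$-dimension $g$, so the equality holds exactly in genus $0$. The paper's route is shorter and relies only on standard statements valid for all global fields.

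Two small bookkeeping corrections.
\begin{itemize}
\item In the density step, the targets $a_v$ at $v\in S_0$ lie in $K_v$ rather than $\mathcal R_v$. So $D$ should satisfy $D_v\ge -v(a_v)$ there, and the evaluation map from $L(D+mw)$ should go to $\bigoplus_v\pi_v^{-D_v}\mathcal R_v/\pi_v^{e_v}\mathcal R_v$. This has $F$-dimension $\deg E$ for $E=\sum_v(e_v+D_v)v$, which is what your count needs.
\item In your partial-fractions aside, no pole needs to be ``absorbed'' by other places, and pushing poles outside $S$ would take you out of $\mathcal O_S$. In genus $0$ one has $\ell(nv)-\ell((n-1)v)=\deg v$ for all $n\ge 1$, so every principal part at $v$ is realized by an element of $K$ whose only pole is at $v$. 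Your main argument with $D=0$ does not need this aside anyway.
\end{itemize}
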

\begin{proof}
For the first assertion, see Theorem 2 in \cite[Chapter~IV, \S 2]{Weil}.  The  assertion on the density of projections follows from Corollary~2 in \cite[Chapter~IV, \S 2]{Weil}.  

In the special case where $K = F(t)$, the fact that $\mathcal O_S \cap R = F$ follows readily from the definitions, and the equality $\mathcal O_S + R = A$ follows from Lemma~4 in \cite[Chapter~IV, \S 2]{Weil}.
\end{proof}

Given a finite set $S \subset \mathscr P$, we also denote by  $\mathcal O_S^*$ the group of units of the ring $\mathcal O_S$, that coincides with
 $$\mathcal O_S^* = \{x  \in \mathcal O_S \mid \ v(x) = 0 \text{ for all valuations } v \not \in S\}.$$
We recall from  Theorem  9 in \cite[Chapter~IV, \S 4]{Weil} that  $\mathcal O_S^*$ splits as the direct product 
\begin{align}\label{eq:T_S}
\mathcal O^*_S &\cong F^* \times T_S,
\end{align}
where $T_S \leq \mathcal O^*_S$ is a free abelian group of rank~$|S|-1$.

We also set 
$$K_S^1 = \big\{(g_v)  \in \prod_{v \in S} K_v^* \mid \sum_{v \in S} \deg(v) v(g_v) = 0\big\}.$$
By the \textbf{Artin product formula} (see Theorem 5 in \cite[Chapter~IV, \S 4]{Weil}), the diagonal map $\mathcal O_S^* \to  \prod_{v \in S} K_v^*$ takes its values in $K_S^1$. 

\begin{prop}\label{prop:mult-lattice}
The diagonal map
$$\mathcal O_S^* \to  \prod_{v \in S} K_v^*$$
embeds 
 $\mathcal O_S^*$ as a cocompact lattice in the subgroup  $K_S^1$ of the multiplicative group $\prod_{v \in S} K_v^*$. Moreover, setting $R_S = \prod_{v \in S} \mathcal R_v^* \leq K_S^1$, we have 
$\mathcal O_S^* \cap R_S = F^*$.

If in addition $K = F(t)$, then we have  $\mathcal O_S^* R_S = K_S^1$. 
\end{prop}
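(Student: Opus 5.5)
Consider the homomorphism $\phi \colon \prod_{v \in S} K_v^* \to \ZZ^S$, $(g_v) \mapsto (v(g_v))_{v\in S}$. I will reduce everything to statements about its kernel and image. The kernel is $R_S = \prod_{v\in S}\mathcal R_v^*$, which is compact and open. The image of $K_S^1$ is the sublattice
$$\Lambda = \{(a_v) \in \ZZ^S \mid \textstyle\sum_v \deg(v) a_v = 0\},$$
a free abelian group of rank $|S|-1$. So $K_S^1$ is an extension of the discrete group $\Lambda$ by the compact open group $R_S$.

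\emph{Discreteness and the intersection.} The diagonal map is injective. An element $x \in \mathcal O_S^* \cap R_S$ satisfies $v(x)=0$ for every place $v$. Hence $x$ has no poles, so it lies in the constant field $F$, and being nonzero it lies in $F^*$. Conversely $F^* \subseteq \mathcal O_S^* \cap R_S$. Since $R_S$ is open in $K_S^1$ and meets $\mathcal O_S^*$ in the finite group $F^*$, the image of $\mathcal O_S^*$ is discrete.

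\emph{Cocompactness.} By the Artin product formula, $\phi(\mathcal O_S^*) \subseteq \Lambda$. By \eqref{eq:T_S}, $\phi$ restricted to $T_S$ is injective, since its kernel is contained in $\mathcal O_S^*\cap R_S = F^*$ and $T_S \cap F^* = 1$. So $\phi(\mathcal O_S^*)$ is free abelian of rank $|S|-1 = \operatorname{rank}\Lambda$, hence of finite index in $\Lambda$. It follows that $K_S^1 = C\, R_S\, \mathcal O_S^*$ for a finite set $C$ of coset representatives, and $C R_S$ is compact. The rank statement of the unit theorem is the key input here, and I take it from \cite{Weil} as in \eqref{eq:T_S}. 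This is the only nontrivial step in the general case.

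\emph{The case $K = F(t)$.} I must show $\phi(\mathcal O_S^*) = \Lambda$; then $K_S^1 = \mathcal O_S^* R_S$ follows immediately. Every place of $F(t)$ other than $\infty$ corresponds to a monic irreducible polynomial $\pi_v$ of degree $\deg(v)$, and $v_\infty$ has degree $1$.

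First suppose $\infty \in S$. Then $\pi_v \in \mathcal O_S^*$ for each finite $v \in S$, with $v(\pi_v) = 1$, $v_\infty(\pi_v) = -\deg v$, and valuation $0$ elsewhere. These images are $e_v - \deg(v) e_\infty$, which generate $\Lambda$: given $a \in \Lambda$, subtract $\sum_{v \ne \infty} a_v(e_v - \deg(v)e_\infty)$, and what remains is a multiple of $e_\infty$ lying in $\Lambda$, hence zero.

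If $\infty \notin S$, apply a Möbius change of variable sending some $v_0 \in S$ of degree $1$ to $\infty$. This works if such a $v_0$ exists. In general it fails, so instead I argue directly with the ratios $\pi_v^{d_w}/\pi_w^{d_v}$. These only generate the sublattice spanned by the vectors $d_w e_v - d_v e_w$. To obtain all of $\Lambda$, I would use the unique factorization in $F[t]$: any $(a_v) \in \Lambda$ gives $x = \prod_v \pi_v^{a_v}$, and $v_\infty(x) = -\sum_v a_v \deg(v) = 0$. So $x \in \mathcal O_S^*$ with $\phi(x) = (a_v)$. The identical construction handles the $\infty \in S$ case as well, taking $x = \prod_{v\ne\infty}\pi_v^{a_v}$, whose $\infty$-valuation is then forced to be $a_\infty$. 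With this observation there is no real obstacle.
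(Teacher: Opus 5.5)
Your proof is correct, but it takes a different route from the paper. The paper cites Weil (Theorem~6 of Ch.~IV, \S4) for the cocompact-lattice statement. For $K=F(t)$ it defers either to Weil's explicit computation (Theorem~9 and its corollary) or to the class-number-one identification $K_S^1/\mathcal O_S^*R_S\cong\mathcal D(S)^0/\mathcal P(S)$.

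You instead use the valuation map $\phi$ to present $K_S^1$ as an extension of $\Lambda\cong\ZZ^{|S|-1}$ by the compact open subgroup $R_S$. This reduces discreteness to the finiteness of $\mathcal O_S^*\cap R_S=F^*$. It reduces cocompactness to $\phi(\mathcal O_S^*)$ having finite index in $\Lambda$, which you read off from the rank in (\ref{eq:T_S}).

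What your route buys:
\begin{itemize}
\item It makes explicit that the obstruction $K_S^1/\mathcal O_S^*R_S\cong\Lambda/\phi(\mathcal O_S^*)$ is exactly the finite group the paper identifies with $\mathcal D(S)^0/\mathcal P(S)$.
\item Your unique-factorization construction $x=\prod_v\pi_v^{a_v}$ is precisely the ``direct computation'' the paper leaves to Weil, i.e.\ the concrete form of class number one for $F(t)$.
\item For $K=F(t)$, the surjectivity $\phi(\mathcal O_S^*)=\Lambda$ gives cocompactness directly, so the whole proposition becomes elementary there without invoking the unit theorem.
\end{itemize}

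What the paper's route buys: it applies verbatim to any function field of class number one, and it explains why the last equality can fail in general.

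In the general case, note that the standard adelic development (as in Weil) deduces the unit theorem from the compactness statement. Your argument therefore runs that logic in reverse: it is a reduction to (\ref{eq:T_S}), not an independent proof. This is harmless, since the paper takes (\ref{eq:T_S}) as given before the proposition.

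In a final write-up, drop the M\"obius and ratio detours. The factorization argument already handles $\infty\in S$ and $\infty\notin S$ uniformly.
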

\begin{proof}
The first assertion follows from Theorem  6 in \cite[Chapter~IV, \S 4]{Weil}. The fact that $\mathcal O_S^* \cap R_S = F^*$ is a  consequence of the definitions. 

The equality $\mathcal O_S^* R_S = K_S^1$ means that the group $\mathcal O_S^*$  maps surjectively on the quotient group $K_S^1/R_S$. The fact that this is indeed the case 
in the special case where $K = F(t)$ can be established by a direct computation, see Theorem~9 and its corollary in \cite[Chapter~IV, \S 4]{Weil}. 

More conceptually, the quotient   $K_S^1/\mathcal O_S^*R_S$ can naturally be identified with the group $\mathcal D(S)^0/\mathcal P(S)$ appearing in \cite[Prop.~14.1(b)]{Rosen}. The group $\mathcal P(S)$ is the subgroup of the group $\mathrm{Div}(K)$ of divisors of $K$ generated by the principal divisors in $\mathcal O_S^*$, while $\mathcal D(S)^0$ consists of all the degree zero divisors contained in the group of divisors generated by the places in $S$. The short exact sequence afforded by the latter result implies that $\mathcal D(S)^0/\mathcal P(S)$ is trivial as soon as the function field $K$ has class number one (i.e. its group of divisor classes of degree zero is trivial). This is indeed the case for the global field $K = F(t)$. (Note that $\mathcal D(S)^0/\mathcal P(S)$ need  not be   trivial in general.)  
\end{proof}

\section{Affine $S$-arithmetic groups}

Given a field $k$, we define the \textbf{affine group} of $k$ as the semi-direct product $k \rtimes k^*$, where the action of $k^*$ on $k$ is by multiplication. We denote it by $\mathrm{AGL}_1(k)$. 

We now continue with the notation of the previous section. 
We set
 $$\Gamma_S = \mathcal O_S \rtimes \mathcal O_S^*.$$
This is a subgroup of the affine group $\mathrm{AGL}_1(K)$ of the global field $K$. 
By (\ref{eq:T_S}), the group of units $\mathcal O_S^*$ splits as the direct product of the multiplicative group $F^*$ and a free abelian group $T_S$ of rank~$|S|-1$.
%
%
%

We also set $A_S = \prod_{v \in S} K_v$, and 
$$G_S = A_S \rtimes K_S^1 \leq \prod_{v \in S} \mathrm{AGL}_1(K_{v}).$$
Combining Propositions~\ref{prop:additive-lattice} and~\ref{prop:mult-lattice}, we obtain the following direct consequence.  

\begin{prop}\label{prop:affine-arith}
The diagonal map $\Gamma_S \to \prod_{v \in S} \mathrm{AGL}_1(K_{v})$ embeds $\Gamma_S$ as a cocompact lattice in $G_S$. Moreover, if $|S| \geq 2$, the projection of $\Gamma_S$ to each factor $ \mathrm{AGL}_1(K_{v})$ is non-discrete. 
\end{prop}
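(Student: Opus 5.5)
The plan is to combine Propositions~\ref{prop:additive-lattice} and~\ref{prop:mult-lattice} through the semidirect product structure. First, the diagonal map respects the semidirect product: units act on $\mathcal O_S$ by multiplication, and $K^*_v$ acts on $K_v$ by multiplication. So it is a homomorphism $\Gamma_S \to \prod_{v\in S}\mathrm{AGL}_1(K_v)$. By the Artin product formula its image lies in $A_S \rtimes K_S^1 = G_S$, and it is injective because $K \to K_v$ is injective. Note that $G_S$ is closed in $\prod_v \mathrm{AGL}_1(K_v)$, since $K_S^1$ is closed, being the kernel of a continuous homomorphism to $\ZZ$.

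For discreteness, I will take a sequence $(a_n,u_n) \in \Gamma_S$ converging to the identity in $G_S$. Then $u_n \to 1$ in $K_S^1$, so $u_n$ is eventually $1$ by Proposition~\ref{prop:mult-lattice}. Similarly $a_n \to 0$ in $A_S$, so $a_n$ is eventually $0$ by Proposition~\ref{prop:additive-lattice} with $S'=\emptyset$.

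For cocompactness, I choose compact sets $C_A \subset A_S$ with $A_S = \mathcal O_S + C_A$, and $C_M \subset K_S^1$ with $K_S^1 = C_M\mathcal O_S^*$. These exist by the two propositions. Given $(a,g)\in G_S$, I write $g = c u$ with $c \in C_M$ and $u \in \mathcal O_S^*$, so that $(a,g) = (a,c)(0,u)$. Then I write $a = c' + b$ with $c' \in C_A$ and $b \in \mathcal O_S$, so that $(a,c) = (c',c)\,(c^{-1}b,1)$. The problem is that $c^{-1}b$ need not lie in $\mathcal O_S$, so the factors must be arranged more carefully. Instead I use $(a,g) = (a,c)(0,u)$ and $(a,c) = (b,1)(a-b,c)$ with $b\in\mathcal O_S$ and $a-b\in C_A$. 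This gives
$$(a,g) = (b,1)\,(a-b,c)\,(0,u) \in \Gamma_S\,(C_A\times C_M)\,\Gamma_S.$$
To get a one-sided decomposition, I move $(0,u)$ past the compact factor: $(a-b,c)(0,u) = (0,u)(u^{-1}(a-b),c)$, and $u^{-1}(a-b)$ is not bounded. So I switch the order and first handle the multiplicative part on the left: write $g = u c$ with $u \in \mathcal O_S^*$ and $c \in C_M$ (possible since $K_S^1$ is abelian). Then $(a,g) = (0,u)(u^{-1}a, c)$. Next write $u^{-1}a = b + c'$ with $b\in \mathcal O_S$ and $c'\in C_A$, so $(u^{-1}a,c) = (b,1)(c',c)$. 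Hence $G_S = \Gamma_S\,(C_A \rtimes C_M)$, where $C_A \rtimes C_M$ denotes the compact image of $C_A\times C_M$. This gives cocompactness, and lattice-ness follows since a discrete cocompact subgroup has finite covolume. Getting the order of the factors right is the only delicate point of the whole argument.

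For non-discreteness of the projections when $|S|\geq 2$, I fix $v\in S$ and apply the density statement of Proposition~\ref{prop:additive-lattice} with $S_0=\{v\}$, which is a proper subset of $S$. This shows that $\mathcal O_S$ is dense in $K_v$. Hence the projection of $\mathcal O_S\rtimes\{1\}$ to $\mathrm{AGL}_1(K_v)$ is non-discrete, and therefore so is the projection of $\Gamma_S$.
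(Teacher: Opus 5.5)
Your proof is correct and follows the paper's route. The paper states the proposition as a direct consequence of Propositions~\ref{prop:additive-lattice} and~\ref{prop:mult-lattice}, with non-discreteness coming from the density assertion, and your final decomposition $(a,g) = (0,u)(b,1)(c',c)$ supplies exactly the detail it leaves implicit; in a write-up, keep only that factorization and drop the two abandoned attempts.
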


The finiteness properties of the affine arithmetic group $\Gamma_S =  \mathcal O_S \rtimes \mathcal O_S^*$   have been analyzed in detail by K.-U. Bux \cite{Bux, Bux04}. The following result follows from his work.

\begin{thm}\label{thm:Bux}
The affine $S$-arithmetic group  $\Gamma_S =  \mathcal O_S \rtimes \mathcal O_S^*$ is of type $\mathrm{F}_{|S|-1}$, but not of type $\mathrm{FP}_{|S|}$. 
\end{thm}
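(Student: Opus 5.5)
The plan is to deduce the statement from Bux's work by matching our group $\Gamma_S$ to the groups he treats. In \cite{Bux04}, Bux studies the $S$-arithmetic subgroups $\mathbf B(\mathcal O_S)$ of Borel subgroups $\mathbf B$ of Chevalley groups over a global function field. He shows that $\mathbf B(\mathcal O_S)$ is of type $\mathrm{F}_{|S|-1}$ but not of type $\mathrm{FP}_{|S|}$. We take $\mathbf B$ to be the group of upper triangular matrices in $\SL_2$. Then
$$\mathbf B(\mathcal O_S) = \left\{\begin{pmatrix} u & x \\ 0 & u^{-1}\end{pmatrix} \;\middle|\; u \in \mathcal O_S^*,\ x \in \mathcal O_S\right\} \cong \mathcal O_S \rtimes \mathcal O_S^*,$$
where $u$ acts on $\mathcal O_S$ by multiplication by $u^2$. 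In \cite{Bux}, Bux treats the affine groups $\mathcal O_S \rtimes \mathcal O_S^*$ directly, with the standard multiplication action. If that result is stated in the required generality, I would simply cite it. Otherwise I would compare $\Gamma_S$ with $\mathbf B(\mathcal O_S)$ as follows.

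The comparison uses the fact that types $\mathrm{F}_m$ and $\mathrm{FP}_m$ are invariant under passing to finite-index subgroups and overgroups, and under quotients by finite normal subgroups. Let $T_S \leq \mathcal O_S^*$ be the free abelian complement of rank $|S|-1$ from (\ref{eq:T_S}). Then $\mathcal O_S \rtimes T_S$ has finite index $|F^*|$ in $\Gamma_S$. Its subgroup $\mathcal O_S \rtimes T_S^2$ has finite index in it, since $T_S/T_S^2$ is finite. In $\mathcal O_S \rtimes T_S^2$, an element $u^2$ acts by multiplication by $u^2$. This is exactly the image of $\mathcal O_S \rtimes T_S$ inside $\mathbf B(\mathcal O_S)$ under the squaring action described above. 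That image has finite index in $\mathbf B(\mathcal O_S)$, because $F^*$ is finite. Hence $\Gamma_S$ and $\mathbf B(\mathcal O_S)$ are commensurable. The finiteness length then transfers from Bux's theorem to $\Gamma_S$.

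The main obstacle is checking that Bux's hypotheses are actually satisfied. He needs $\mathbf B$ to be a Borel subgroup of a split reductive group, which holds for $\SL_2$. His result must also hold for an arbitrary nonempty finite $S$ in arbitrary characteristic $p$. Finally, the rank appearing in his formula must be the number of places $|S|$; for $\SL_2$ the local ranks are all $1$, so this holds. If $|S|=1$, the statement says $\Gamma_S$ is not finitely generated. This follows directly: $\mathcal O_S$ is an infinite-dimensional $F$-vector space and $\mathcal O_S^*=F^*$ is finite, so $\Gamma_S$ is an infinite torsion group that is locally finite.
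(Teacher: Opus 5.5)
Your proposal is correct and follows essentially the same route as the paper. Both deduce the statement from Bux's results on the Borel subgroup $\mathbf B(\mathcal O_S)$ of $\SL_2$ together with the commensurability of $\mathbf B(\mathcal O_S)$ and $\Gamma_S$; the paper also cites Bux's earlier affine-group result for the $\mathrm{FP}$ part and defers the commensurability to Bux's Remark~3.6, whereas you make it explicit via the isomorphism $\mathcal O_S \rtimes T_S^2 \cong \mathcal O_S \rtimes_{u \mapsto u^2} T_S$, which holds because $T_S$ is torsion-free.
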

\begin{proof}
The second theorem in the introduction of \cite{Bux} ensures that $\Gamma_S$ is of type $\mathrm{FP}_{|S|-1}$, but not of type $\mathrm{FP}_{|S|}$. In \cite[Cor.~3.5]{Bux04}, it is proved that the Borel subgroup of $\SL_2$ over $\mathcal O_S$ is  actually of type $\mathrm{F}_{|S|-1}$. That Borel subgroup is commensurable with $\Gamma_S$ (see the exact sequences in \cite[Remark~3.6]{Bux04}), hence they have the same finiteness properties, see \cite{Alonso}. 
\end{proof}

In particular $\Gamma_S$ is finitely generated if and only if $|S| \geq 2$.  For the sake of completeness, we include a direct proof of this fact. 

\begin{lem}\label{lem:compact-gen}
The following assertions are equivalent. 
\begin{enumerate}[label=(\roman*)]
\item  $\Gamma_S$ is finitely generated.
\item $G_S$ is compactly generated. 
\item  $|S|\geq 2$. 
\end{enumerate}
\end{lem}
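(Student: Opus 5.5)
We prove (i)$\Leftrightarrow$(ii) using Proposition~\ref{prop:affine-arith}, then (iii)$\Rightarrow$(ii) and (i)$\Rightarrow$(iii) directly.

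\emph{(i)$\Leftrightarrow$(ii).} By Proposition~\ref{prop:affine-arith}, $\Gamma_S$ is a cocompact lattice in $G_S$. For a cocompact lattice $\Gamma$ in a locally compact group $G$, $\Gamma$ is finitely generated if and only if $G$ is compactly generated. Indeed, if $C$ is a compact set with $G = C\Gamma$, then compact generation of either group transfers to the other by a standard Schreier-type argument; alternatively, both are quasi-isometric via the orbit map.

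\emph{(iii)$\Rightarrow$(ii).} Assume $|S|\ge 2$. Then $K_S^1$ contains the compact open subgroup $R_S$, and by Proposition~\ref{prop:mult-lattice} the quotient $K_S^1/R_S$ is discrete and cocompactly contains the image of $\mathcal O_S^*$, a free abelian group of rank $|S|-1 \ge 1$. It is therefore a finitely generated abelian group, so $K_S^1$ is compactly generated. Choose $v\in S$ and an element $g\in K_S^1$ whose $v$-coordinate $g_v$ satisfies $v(g_v)>0$, for instance by compensating with negative valuation at another place of $S$. Conjugating $\mathcal R_v$ by powers of $g^{-1}$ gives
$$\bigcup_{k\ge 0} g_v^{-k}\mathcal R_v = K_v.$$
Hence $K_v$ lies in the group generated by $\mathcal R_v$ together with $K_S^1$. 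Doing this for every $v\in S$ shows that $A_S\rtimes K_S^1$ is generated by the compact set $\prod_{v\in S}\mathcal R_v$ together with a compact generating set of $K_S^1$.

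\emph{(i)$\Rightarrow$(iii).} We argue by contraposition: assume $|S|=1$. Then $\mathcal O_S^*=F^*$ is finite, so $\Gamma_S=\mathcal O_S\rtimes F^*$ contains $\mathcal O_S$ with finite index. Thus $\Gamma_S$ is finitely generated if and only if $\mathcal O_S$ is. But $\mathcal O_S$ is an abelian group of exponent $p$, and it is infinite: for example, by Riemann--Roch there are nonconstant functions with poles only at the unique place in $S$. A finitely generated abelian group of exponent $p$ is finite, so $\mathcal O_S$ is not finitely generated. Equivalently, $G_S = K_v\rtimes \mathcal R_v^*$ is the increasing union of the compact open subgroups $t^{-k}\mathcal R_v\rtimes\mathcal R_v^*$, which is a strictly increasing chain of proper open subgroups, so $G_S$ is not compactly generated.

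The main point requiring care is the contraction argument in (iii)$\Rightarrow$(ii), namely producing an element of $K_S^1$ that acts as a genuine contraction at the place $v$. This is exactly where $|S|\ge 2$ is used: the valuation at $v$ can be made positive only by compensating with a negative valuation at another place of $S$, so as to satisfy the degree-sum condition defining $K_S^1$.
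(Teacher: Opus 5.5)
Your proof is correct. For (i)$\Leftrightarrow$(ii) and (iii)$\Rightarrow$(ii) you follow the paper's route. You use cocompactness of $\Gamma_S$ in $G_S$, compact generation of $K_S^1$ coming from $T_S$ (you pass through the discrete quotient $K_S^1/R_S$), and exhaustion of each $K_v$ by conjugates of $\mathcal R_v$ under an element of $K_S^1$ that contracts at $v$. You make that last step more explicit than the paper does, which only says the projection to $K_v^*$ is cocompact and so contains a power of a uniformizer. Taking $g_v=\pi_v^{\deg w}$ and $g_w=\pi_w^{-\deg v}$ for some $w\neq v$ in $S$ shows exactly where $|S|\ge 2$ enters.

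The genuine difference is the converse direction. You argue on the discrete side: $|S|=1$ forces $\mathcal O_S^*=F^*$, so $\Gamma_S$ is virtually the infinite group $\mathcal O_S$ of exponent $p$, which cannot be finitely generated. The paper argues on the locally compact side: $G_S=K_v\rtimes\mathcal R_v^*$ contains $K_v$ cocompactly, and $K_v$ is a strictly increasing union of compact open subgroups.

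Your version is purely algebraic and does not need the lattice/compact-generation transfer for this implication. It relies on positive characteristic, but so does the paper's argument, through the non-archimedean $K_v$; the two are the discrete and continuous faces of the same local ellipticity. For the infinitude of $\mathcal O_S$ you do not need Riemann--Roch: Proposition~\ref{prop:additive-lattice} already makes $\mathcal O_S$ a cocompact lattice in the non-compact group $K_v$.

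Your ``equivalently'' argument is essentially the paper's, streamlined by exhausting $G_S$ itself by the compact open subgroups $\pi_v^{-k}\mathcal R_v\rtimes\mathcal R_v^*$. However, you wrote $t^{-k}$ where you need $\pi_v^{-k}$ for a uniformizer $\pi_v$ of $K_v$. Here $K$ is an arbitrary global function field, and even in $F(t)$ the element $t$ is a uniformizer only at $v_t$.
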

\begin{proof}
The equivalence between (i) and (ii) is a consequence of Proposition~\ref{prop:affine-arith}, see \cite[Proposition~4.C.11(2)]{CoHa}. 

If $S =\{v\}$ then $G_S = K_v \rtimes K^1_v$, where $K^1_v = \{x \in K_v^* \mid v(x) =0\}$. Observing that $ K^1_v$ is compact, it follows that $G_S$ contains the additive group $K_v$ as a cocompact subgroup. Since the latter is the union of an infinite properly ascending chain of compact open subgroups, it is not compactly generated. Hence $G_S$ is not compactly generated either (see \cite[Proposition~4.C.11(2)]{CoHa}). 

Suppose now that $|S|\geq 2$. The group $K^1_S$ contains the free abelian group $T_S$ as a cocompact lattice by Proposition~\ref{prop:mult-lattice}, hence it is compactly generated. Hence the subgroup $H$ of $G_S$ generated by $K_S^1$ and the compact subgroup $\prod_{v \in S} \mathcal R_v$ is itself compactly generated. For each $v$, the projection of $H$ to $\mathrm{AGL}_1(K_{v})$ contains a cocompact subgroup of $K_v^*$, hence some positive power of a uniformizer, say $\pi_v \in K_v^*$. It follows that $H$ contains the additive group $K_v = \bigcup_{n >0} \pi_v^{-n} \mathcal R_v$. Hence $H$ contains $\prod_{v \in S} K_v$, so that $H = G_S$. 
\end{proof}

\section{Ascending chains}\label{sec:towers:irred}

We retain the notation of the previous sections	and we now choose  a discrete valuation $v_0$ of $K$ not contained in $S$. For each integer $k  \in \mathbf Z$, set 
$$\mathcal O_{S, k} = \{x  \in \mathcal O_{S \cup \{v_0\} }\mid v_0(x) \geq -k\}.$$
Thus we have an ascending  chain 
$$\mathcal O_S = \mathcal O_{S, 0} < \mathcal O_{S, 1} < \dots  $$ 
where each quotient group $\mathcal O_{S, k+1}/\mathcal O_{S, k}$ is finite, and isomorphic to the additive group of $\bar K_{v_0}$. For $x \in \mathcal O_S^*$, we have $v_0(x) = 0$ so that the multiplication by $x$ preserves $\mathcal O_{S, k}$ for all $k$. Hence, we may set 
$$\Gamma_k = \mathcal O_{S, k} \rtimes \mathcal O_S^*.$$

\begin{cor}\label{cor:affine:chain}
The groups $\Gamma_S = \Gamma_0  < \Gamma_1 < \dots$ form an ascending chain of cocompact lattices in $G_S$.
\end{cor}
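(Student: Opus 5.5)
The strategy is to show that each $\Gamma_k$ is a discrete subgroup of $G_S$ containing $\Gamma_S=\Gamma_0$ with finite index. By Proposition~\ref{prop:affine-arith}, $\Gamma_S$ is a cocompact lattice in $G_S$. A discrete subgroup that contains a cocompact lattice is itself a cocompact lattice: it is cocompact because $G_S/\Gamma_k$ is a continuous image of the compact space $G_S/\Gamma_S$. Its covolume is then $\nu(G_S/\Gamma_S)/[\Gamma_k:\Gamma_S]$, which is finite precisely when the index is finite. The chain property itself is clear, since $\mathcal O_{S,k}\subseteq \mathcal O_{S,k+1}$ and all the $\Gamma_k$ share the multiplicative part $\mathcal O_S^*$.

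First I need $\Gamma_k$ to be a subgroup of $G_S$ under the diagonal embedding. Elements of $\mathcal O_{S,k}\subset K$ embed diagonally into $A_S=\prod_{v\in S}K_v$. The unit part is $\mathcal O_S^*$, which lands in $K_S^1$ by the product formula. The semidirect product structure is compatible because multiplication by units preserves $\mathcal O_{S,k}$, as noted before the statement.

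\textbf{Finite index.} The index is $[\Gamma_k:\Gamma_0]=[\mathcal O_{S,k}:\mathcal O_S]=|\bar K_{v_0}|^k$. This follows from the stated isomorphisms $\mathcal O_{S,j+1}/\mathcal O_{S,j}\cong \bar K_{v_0}$, and the index is finite whatever the exact quotients are. To get finiteness independently, note that $x\mapsto$ (the polar part of the $v_0$-adic expansion of $x$ up to order $-k$) injects $\mathcal O_{S,k}/\mathcal O_S$ into $\pi_{v_0}^{-k}\mathcal R_{v_0}/\mathcal R_{v_0}$, which is finite. The kernel is $\mathcal O_{S,k}\cap\{v_0\ge0\}=\mathcal O_S$.

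\textbf{Discreteness.} This is the main point. I would apply Proposition~\ref{prop:additive-lattice} to the enlarged set $S\cup\{v_0\}$ with $S'=\emptyset$: the ring $\mathcal O_{S\cup\{v_0\}}$ is discrete in $\prod_{v\in S}K_v\times K_{v_0}$. Now $\mathcal O_{S,k}$ is exactly the preimage of the compact open set $\pi_{v_0}^{-k}\mathcal R_{v_0}$ in the $v_0$-coordinate. Suppose a sequence in $\mathcal O_{S,k}$ tends to $0$ in $A_S$. Its $v_0$-coordinates lie in a compact set, so after passing to a subsequence they converge in $K_{v_0}$. Taking differences of consecutive terms gives elements of $\mathcal O_{S\cup\{v_0\}}$ tending to $0$ in the full product, so these differences are eventually $0$. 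Hence $\mathcal O_{S,k}$ is discrete in $A_S$.

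Alternatively, finite index already suffices for discreteness: a group containing a discrete subgroup with finite index is discrete, since it is a finite union of cosets, each of which is closed and discrete. This shortcut avoids the argument above entirely. Combining additive discreteness with the discreteness of $\mathcal O_S^*$ in $K_S^1$ (Proposition~\ref{prop:mult-lattice}) shows that $\Gamma_k$ is discrete in $G_S$, and the lattice conclusion follows as in the first paragraph.
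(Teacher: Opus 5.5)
Your proposal is correct and follows the paper's route. The paper observes that $[\Gamma_{k+1}:\Gamma_k]=|\bar K_{v_0}|$ and deduces the claim directly from Proposition~\ref{prop:affine-arith}; your finite-index-overgroup argument does the same thing, spelling out the details (discreteness as a finite union of cosets of $\Gamma_S$, cocompactness via the surjection $G_S/\Gamma_S\to G_S/\Gamma_k$). One small caveat: your independent injection $\mathcal O_{S,k}/\mathcal O_S\hookrightarrow \pi_{v_0}^{-k}\mathcal R_{v_0}/\mathcal R_{v_0}$ only gives finiteness of the index, so strictness of the inclusions rests on the surjectivity in $\mathcal O_{S,k+1}/\mathcal O_{S,k}\cong\bar K_{v_0}$ that you (like the paper) take from the preceding discussion.
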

\begin{proof}
The discussion above implies that   $[\Gamma_{k+1} : \Gamma_k ] = |\bar K_{v_0}|$ for all $k$. Thus the assertion is a direct consequence of Proposition~\ref{prop:affine-arith}. 
\end{proof}

Our next goal is to extend that construction   in order to obtain ascending chains of irreducible lattices in products. For that purpose, we
 fix an integer $n \geq 2$ and a collection $S_1, \dots, S_n$ of disjoint finite sets of valuations of $K$.
We also set 
$S = \bigcup_{j=1}^n S_j$, 
and
$$\Gamma_{S_1, \dots, S_n} = \mathcal O_S \rtimes \big(\mathcal O_{S_1}^*  \dots \mathcal O_{S_n}^*\big).$$
Observe that the product $\mathcal O_{S_1}^*  \dots \mathcal O_{S_n}^*$ is a subgroup of $\mathcal O_S^*$, so that $\Gamma_{S_1, \dots, S_n}$ is a normal subgroup of $\Gamma_S$, affording the quotient $\Gamma_S/\Gamma_{S_1, \dots, S_n} \cong \mathbf Z^{n-1}$. More importantly for us, the group $\Gamma_{S_1, \dots, S_n}$ naturally embeds in $G_{S_i}$ for all $i$. The diagonal map thus yields an embedding 
$$\Gamma_{S_1, \dots, S_n} \to G_{S_1} \times \dots \times G_{S_n}.$$
We shall identify $\Gamma_{S_1, \dots, S_n} $ with its image. 

\begin{thm}\label{thm:affine-arith:prod}
The following assertions hold. 
\begin{enumerate}[label=(\roman*)]
\item The group $\Gamma_{S_1, \dots, S_n}$ is a cocompact lattice in $G = G_{S_1} \times \dots \times G_{S_n}$. 

\item The projection of $\Gamma_{S_1, \dots, S_n}$ to every sub-product of $G_{S_1} \times \dots \times G_{S_n}$ is injective, and has a non-discrete image. 
%
%
%
%
\item There is an infinite strictly ascending chain of lattices 
$$\Gamma_{S_1, \dots, S_n} = \Gamma_0 < \Gamma_1 < \dots < G.$$
\end{enumerate}
\end{thm}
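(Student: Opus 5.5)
Part (i) comes first. I compare $G = G_{S_1}\times\dots\times G_{S_n}$ with $G_S = A_S \rtimes K_S^1$, using $A_S = A_{S_1}\times\dots\times A_{S_n}$. The product $K^1_{S_1}\times\dots\times K^1_{S_n}$ is a closed subgroup of $K^1_S$, namely the kernel of the continuous homomorphism $K_S^1\to \mathbf Z^n$, $(g_v)\mapsto \big(\sum_{v\in S_j}\deg(v)v(g_v)\big)_j$. Hence $G$ embeds as a closed (indeed open) normal subgroup of $G_S$, and $\Gamma_{S_1,\dots,S_n} = \Gamma_S\cap G$. Discreteness of $\Gamma_{S_1,\dots,S_n}$ in $G$ then follows from Proposition~\ref{prop:affine-arith}.

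For cocompactness, I first show $G\Gamma_S$ has finite index in $G_S$, i.e. $\mathcal O_S^*$ has finite-index image in $K_S^1/(K^1_{S_1}\times\dots\times K^1_{S_n})$. This quotient embeds in the rank-$(n-1)$ lattice $\{x\in\mathbf Z^n : \sum x_j=0\}$. The image of $\mathcal O_S^*$ contains $\mathcal O_S^*$ modulo $\mathcal O_{S_1}^*\cdots\mathcal O_{S_n}^*$, which is of rank $(|S|-1)-\sum_j(|S_j|-1)=n-1$ by~(\ref{eq:T_S}), so the index is finite. Then $G/\Gamma_{S_1,\dots,S_n}$ maps onto a finite-index, hence open and closed, piece of $G_S/\Gamma_S$, which is compact. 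Alternatively, I can argue directly: $\mathcal O_{S_j}^*$ is cocompact in $K^1_{S_j}$ by Proposition~\ref{prop:mult-lattice}, so $\prod_j\mathcal O^*_{S_j}$ is cocompact in $\prod_jK^1_{S_j}$; the subgroup $\mathcal O_{S_1}^*\cdots\mathcal O_{S_n}^*\le K$ is isomorphic to it up to the finite group $F^{*}$ identifications; and $\mathcal O_S$ is cocompact in $A_S$ by Proposition~\ref{prop:additive-lattice}. Combining these gives cocompactness of the semidirect product. I would use this direct argument, checking only that the diagonal image of $\mathcal O_{S_1}^*\cdots\mathcal O_{S_n}^*$ in $\prod_j K_{S_j}^1$ agrees with the image of $\prod_j\mathcal O^*_{S_j}$ up to finite index. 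This holds because the element $x_j\in\mathcal O_{S_j}^*$ has valuation $0$ outside $S_j$, so it lies in $K^1_{S_i}$ (indeed in $R_{S_i}$) for $i\ne j$.

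For (ii), injectivity is clear, since already the projection to $A_{S_i}$ is injective on $\mathcal O_S\subset K$, and the map on the multiplicative part is injective because $K\to K_v$ is injective. For non-discreteness on a proper sub-product $\prod_{j\in J}G_{S_j}$, I look at the normal subgroup $\mathcal O_S$. Its image in $\prod_{j\in J}A_{S_j}$ is dense by the density statement of Proposition~\ref{prop:additive-lattice} (with $S_0=\bigcup_{j\in J}S_j$, a proper subset of $S$, and $S'=\emptyset$), and a dense subgroup of a non-discrete group is non-discrete. The sub-products are presumably meant to be proper and non-empty; the full product is covered by (i).

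For (iii), I mimic Corollary~\ref{cor:affine:chain}. I pick $v_0\notin S$ and set $\Gamma_k=\mathcal O_{S,k}\rtimes(\mathcal O_{S_1}^*\cdots\mathcal O_{S_n}^*)$; these are well defined since units in $\mathcal O_S^*$ have $v_0$-valuation $0$. Each $\Gamma_k$ embeds diagonally into $G$, and $[\Gamma_{k+1}:\Gamma_k]=|\bar K_{v_0}|$, so $\Gamma_k$ contains the lattice $\Gamma_0$ with finite index. The main point is discreteness of $\Gamma_k$ in $G$. For this I use that $\Gamma_k$ is a finite extension of $\Gamma_0$: a group containing a discrete subgroup with finite index is discrete, since the $\Gamma_0$-cosets are finitely many translates of a discrete set. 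Each $\Gamma_k$ is therefore a cocompact lattice, and the chain is strictly increasing. I expect no serious obstacle; the only delicate step is the bookkeeping in (i) showing that the product of unit groups is cocompact in $\prod_jK^1_{S_j}$.
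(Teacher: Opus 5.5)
Parts (ii), (iii) and the discreteness half of (i) are fine and follow the paper. The gap is in the cocompactness step of (i), at exactly the point you flag as the only thing to check. The claim that the diagonal image of $\mathcal O_{S_1}^*\cdots\mathcal O_{S_n}^*$ in $\prod_j K^1_{S_j}$ agrees with the image of $\prod_j\mathcal O_{S_j}^*$ up to finite index is false.

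Write $\iota(x_1,\dots,x_n)=(x_1|_{S_1},\dots,x_n|_{S_n})$ and $\delta(y)=(y|_{S_1},\dots,y|_{S_n})$. Suppose $\delta(y)=\iota(x_1,\dots,x_n)$. Since $K\to K_v$ is injective, $y=x_j$ in $K$ for every $j$, so $y\in\bigcap_j\mathcal O_{S_j}^*=F^*$. Thus the two images meet in a finite group, although both are infinite as soon as some $|S_j|\geq 2$.

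What your observation that $x_j|_{S_i}\in R_{S_i}$ for $i\neq j$ actually shows is $\delta(x_1\cdots x_n)\in\iota(x_1,\dots,x_n)R_S^*$, where $R_S^*=\prod_{v\in S}\mathcal R_v^*$. So the two images agree \emph{modulo a compact open subgroup}, not up to finite index. That is exactly what is needed. Proposition~\ref{prop:mult-lattice}, applied to each $S_j$, gives $\prod_jK^1_{S_j}=C\cdot\iota(\prod_j\mathcal O_{S_j}^*)$ with $C$ compact. Since the group is abelian, it follows that $\prod_jK^1_{S_j}=(CR_S^*)\cdot\delta(\mathcal O_{S_1}^*\cdots\mathcal O_{S_n}^*)$. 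With this correction your route is a legitimate alternative to the paper's. The paper instead combines discreteness with a rank count: $\prod_iK^1_{S_i}/R_S^*$ is free abelian of rank $|S|-n$, and $\mathcal O_{S_1}^*\cdots\mathcal O_{S_n}^*$ is virtually free abelian of the same rank.

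Your first route has a related defect. For a general global function field, $\Gamma_{S_1,\dots,S_n}\leq\Gamma_S\cap G$ is only an inclusion, and the paper only claims $\leq$. An $S$-unit whose divisor has degree $0$ on each $S_j$ factors into $S_j$-units only if each restricted divisor is principal. This can fail when the degree-zero divisor class group is non-trivial; equality does hold for $K=F(t)$.

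The inclusion suffices for discreteness. But passing from compactness of $G/(\Gamma_S\cap G)$ to compactness of $G/\Gamma_{S_1,\dots,S_n}$ requires the index to be finite, and that is again the rank comparison. Concretely, $\mathcal O_S^*\cap\prod_jK^1_{S_j}$ meets $R_S^*$ in $F^*$, so it has rank at most $|S|-n$. Your sentence saying the image of $\mathcal O_S^*$ contains $\mathcal O_S^*/\mathcal O_{S_1}^*\cdots\mathcal O_{S_n}^*$ runs the wrong way: that image is a quotient of this group. Finiteness of $[G_S:G\Gamma_S]$ is not needed at all, since $G$ is open and so $G\Gamma_S/\Gamma_S$ is already closed in the compact space $G_S/\Gamma_S$.
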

	
\begin{proof}
(i) In view of Proposition~\ref{prop:additive-lattice}, it suffices to prove that the multiplicative part $\mathcal O_{S_1}^*  \dots \mathcal O_{S_n}^*$ of $\Gamma_{S_1, \dots, S_n} $ is a cocompact lattice in the multiplicative part of $\prod_i G_{S_i}$, which is the group $\prod_{i=1}^n K^1_{S_i}$. 

Since $\mathcal O_{S_1}^*  \dots \mathcal O_{S_n}^* \leq  \mathcal O^*_S \cap \big( \prod_{i=1}^n K^1_{S_i}\big)$ by definition, we deduce from Proposition~\ref{prop:mult-lattice} that $\mathcal O_{S_1}^*  \dots \mathcal O_{S_n}^*$ is a discrete subgroup of $\prod_{i=1}^n K^1_{S_i}$. Now, the compact group $R_S^* = \prod_{v \in S} \mathcal R^*_v$ is open and normal in the abelian group $\prod_{i=1}^n K^1_{S_i}$, and the quotient is free abelian of rank $\sum_{i=1} (|S_i|-1) = |S|-n$. On the other hand the product $\mathcal O_{S_1}^*  \dots \mathcal O_{S_n}^*$ is virtually free abelian of rank~$|S|-n$ as well, in view of \ref{eq:T_S}. This implies that the inclusion of $\mathcal O_{S_1}^*  \dots \mathcal O_{S_n}^*$ in $\prod_{i=1}^n K^1_{S_i}$ is cocompact.
%
%

(ii) The injectivity of the projection comes from the fact that the affine group $K \rtimes K^*$ maps injectively in the affine group $K_v \rtimes K_v^*$ over the completed field with respect to every place $v \in \mathscr P$. The non-discreteness is a direct consequence of the last assertion in Proposition~\ref{prop:additive-lattice}. 

%
%

(iii) This follows directly from the same construction as in Corollary~\ref{cor:affine:chain}, by setting $\Gamma_k  = \mathcal O_{S, k} \rtimes \big(\mathcal O_{S_1}^*  \dots \mathcal O_{S_n}^*\big)$. 
\end{proof}

Theorem~\ref{thm:affine-arith:prod} is already sufficient to obtain ascending chains of irreducible lattices in products of trees. In order to establish this, we first recall the construction of \textbf{Cayley--Abels graphs} associated with compactly generated totally disconnected locally compact groups. 

\begin{lem}\label{lem:CayAb}
Let $G$ be a locally compact group generated by a compact symmetric subset $\Sigma$,  and $U \leq G$ be a compact open subgroup. Then the  graph $\mathcal G$ with vertex set $G/U$ and edge set $\{(gU, hU) \mid h^{-1}g \in U\Sigma U\}$ is connected and of finite valency. Moreover the $G$-action on $G/U$ by left multiplication defines a continuous action of $G$ by automorphisms on $\mathcal G$, which is proper and vertex-transitive. 
\end{lem}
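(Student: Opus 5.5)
The plan is to verify directly the four claims of Lemma~\ref{lem:CayAb}: the edge relation is well defined and symmetric, the valency is finite, the graph is connected, and the $G$-action is continuous, proper and vertex-transitive. Throughout I use the standing fact that $U$ is compact and open.

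\emph{Well-definedness, symmetry and invariance.} If $g' = gu$ and $h' = hu'$ with $u,u' \in U$, then $h'^{-1}g' = u'^{-1}h^{-1}g u$. This lies in $U\Sigma U$ exactly when $h^{-1}g$ does, because $U\Sigma U$ is bi-$U$-invariant. So the edge relation depends only on the cosets $gU$ and $hU$. It is symmetric because $(U\Sigma U)^{-1} = U\Sigma^{-1}U = U\Sigma U$, using that $\Sigma$ is symmetric. It is $G$-invariant because $(kh)^{-1}(kg) = h^{-1}g$. Hence left multiplication acts by graph automorphisms. The action is transitive on $G/U$, hence vertex-transitive.

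\emph{Finite valency.} The neighbours of the base vertex $U$ are the cosets $hU$ with $h^{-1} \in U\Sigma U$, that is, $h \in U\Sigma^{-1}U = U\Sigma U$. So they are exactly the cosets contained in $U\Sigma U$. This set is compact, being a product of compact sets, and it is covered by the disjoint open cosets $hU$. Compactness therefore leaves only finitely many of them. By vertex-transitivity every vertex has the same finite valency.

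\emph{Connectedness.} This is the main step. The vertices in the connected component of $U$ are the cosets $gU$ with $g$ in the set
\[
H = \bigcup_{m \ge 0} (U\Sigma U)^m .
\]
Since $U\Sigma U$ is symmetric, $H$ is a subgroup of $G$. It contains $\Sigma$, which generates $G$, so $H = G$. I need a little care to make sure a path of consecutive edges really corresponds to such products. If $g_0 = 1, g_1, \dots, g_m$ are representatives of the vertices along a path, then $g_m = \prod_i g_{i-1}^{-1}g_i$ with each factor in $(U\Sigma U)^{-1} = U\Sigma U$. Conversely, every product of elements of $U\Sigma U$ yields such a path. Hence every vertex $gU$ lies in the component of $U$.

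\emph{Continuity and properness.} The topology on $\mathrm{Aut}(\mathcal G)$ is that of pointwise convergence on vertices. Continuity therefore amounts to each vertex stabiliser $gUg^{-1}$ being open, which holds since $U$ is open. For properness, fix finite sets $A, B$ of vertices. The set $\{g : gA \cap B \neq \emptyset\}$ is a finite union of sets $\{g : g\,xU = yU\} = yUx^{-1}$, each of which is compact. So the action is proper.
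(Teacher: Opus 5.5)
Your proof is correct and complete. The paper gives no argument of its own for this lemma and simply refers to \cite[Prop.~2.E.9]{CoHa}. What you have written is essentially the standard self-contained verification behind that reference, so the comparison is between a citation and a direct proof rather than between two different strategies.

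The citation buys brevity. Your direct check shows exactly where each hypothesis is used:
\begin{itemize}
\item symmetry of $\Sigma$ makes the edge relation symmetric;
\item compactness of $\Sigma$ and $U$, together with openness of $U$, gives finite valency, since the compact set $U\Sigma U$ is a disjoint union of open cosets;
\item the fact that $\Sigma$ generates $G$ gives connectedness, via the subgroup $\bigcup_{m\ge 0}(U\Sigma U)^m$;
\item openness of $U$ gives continuity;
\item compactness of the sets $yUx^{-1}$ gives properness.
\end{itemize}

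One harmless side remark: if $\Sigma\cap U\neq\emptyset$, then $1\in U\Sigma U$ and every vertex carries a loop. This does not affect any assertion of the lemma.
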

\begin{proof}
See \cite[Prop.~2.E.9]{CoHa}. 
\end{proof}

The following result is a first approximation of Theorem~\ref{thm:exist-chains}. 

\begin{cor}\label{cor:asc-chains-trees}
For each $n$, there is an $n$-tuple of homogeneous locally finite trees $\mathcal T_1, \dots, \mathcal T_n$ and an ascending chain of cocompact lattices in $\mathrm{Aut}(\mathcal T_1) \times \dots \times \mathrm{Aut}(\mathcal T_n)$ with a non-discrete projection on each factor. 
\end{cor}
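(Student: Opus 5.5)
The plan is to combine Theorem~\ref{thm:affine-arith:prod} with Lemma~\ref{lem:CayAb}, and then pass to universal covers. Fix $n$ and choose disjoint sets $S_1, \dots, S_n$ of places of $K$, each with $|S_i| \geq 2$. By Lemma~\ref{lem:compact-gen}, each $G_{S_i}$ is compactly generated. It is also totally disconnected, since it is a closed subgroup of $\prod_{v\in S_i}\mathrm{AGL}_1(K_v)$. Let $U_i = \prod_{v\in S_i}\mathcal R_v \rtimes \prod_{v \in S_i}\mathcal R_v^*$, which is a compact open subgroup of $G_{S_i}$. Lemma~\ref{lem:CayAb} then gives a connected, locally finite, vertex-transitive graph $\mathcal G_i$ on which $G_{S_i}$ acts properly and continuously. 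By vertex-transitivity $\mathcal G_i$ is regular, of some degree $d_i$; enlarging the generating set if necessary, we may assume $d_i\geq 3$. Let $\mathcal T_i$ be its universal covering tree, which is the $d_i$-regular tree.

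Next I will transport the lattices. Write $\rho_i \colon G_{S_i}\to \mathrm{Aut}(\mathcal G_i)$. Its kernel is contained in $U_i$, hence compact, and its image is closed; this follows from properness, since the action is proper and the stabilizers are compact open. So the image $H=\prod_i\rho_i(G_{S_i})$ is a closed subgroup of $\prod_i\mathrm{Aut}(\mathcal G_i)$ acting cocompactly, indeed vertex-transitively, on $\prod_i \mathcal G_i$. The images of the chain $\Gamma_0<\Gamma_1<\cdots$ from Theorem~\ref{thm:affine-arith:prod}(iii) are discrete and cocompact in $H$, because the kernel is compact. They therefore act properly and cocompactly on $\prod\mathcal G_i$, and the chain stays strictly ascending provided the $\Gamma_k$ meet the compact kernel trivially. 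Since $\Gamma_k$ is discrete, $\Gamma_k\cap \ker$ is finite. It should in fact be trivial: an element in the kernel acts trivially on $G_{S_i}/U_i$, so it lies in every conjugate of $U_i$, and that intersection is trivial because conjugating by large additive translations shrinks it to the identity. Strict ascent is then preserved, and if needed one can pass to a subchain.

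The remaining step is lifting to trees. Let $\widetilde\Gamma_k$ be the group of all automorphisms of $\prod\mathcal T_i$ that lift elements of $\Gamma_k$ and preserve each factor; since $\Gamma_k$ preserves the factors, this makes sense. Concretely, $\widetilde\Gamma_k$ is an extension of $\Gamma_k$ by the deck group $\prod_i\pi_1(\mathcal G_i)$. It acts properly and cocompactly on $\prod \mathcal T_i$, it is discrete, and $\widetilde\Gamma_k<\widetilde\Gamma_{k+1}$ strictly. For non-discreteness of the projection to $\mathrm{Aut}(\mathcal T_i)$: the projection of $\Gamma_k$ to $\mathrm{Aut}(\mathcal G_i)$ is non-discrete by Theorem~\ref{thm:affine-arith:prod}(ii). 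So the stabilizer of a vertex of $\mathcal G_i$ in that projection is infinite, and these elements lift to infinitely many elements fixing a lifted vertex of $\mathcal T_i$. Hence the vertex stabilizer in the projection of $\widetilde\Gamma_k$ is infinite, which gives non-discreteness.

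I expect the main obstacle to be this lifting step. One must check that the full group of lifts of $\Gamma_k$ is a lattice, and that lifts of elements acting nontrivially on $\mathcal G_i$ but fixing a vertex really yield infinitely many distinct elements in the projection to $\mathrm{Aut}(\mathcal T_i)$. I would handle both points by working in the covering group $\widetilde{\mathrm{Aut}}$, namely the lifts of $\mathrm{Aut}(\mathcal G_i)$ to $\mathcal T_i$. This is a closed subgroup of $\mathrm{Aut}(\mathcal T_i)$ that surjects onto $\mathrm{Aut}(\mathcal G_i)$ with discrete kernel $\pi_1(\mathcal G_i)$, and discreteness and cocompactness transfer through this extension.
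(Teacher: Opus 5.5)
Your proposal follows essentially the paper's route: Cayley--Abels graphs for the $G_{S_i}$ (Lemma~\ref{lem:CayAb}), transport of the chain from Theorem~\ref{thm:affine-arith:prod}(iii), lifting to the universal covering trees as extensions by $\prod_i\pi_1(\mathcal G_i)$, and non-discreteness inherited from Theorem~\ref{thm:affine-arith:prod}(ii) via $\Gamma_0\leq\Gamma_k$. The only difference is that the paper shrinks $U_i$ by van Dantzig so that $\Gamma_0\cap\prod_i U_i=\{e\}$, whereas your choice $U_i=\prod_{v\in S_i}\mathcal R_v\rtimes\prod_{v\in S_i}\mathcal R_v^*$ leaves finite vertex stabilizers, which is harmless.

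One justification needs repair. Since $(b,1)(y,a)(b,1)^{-1}=(y+(1-a)b,a)$, translations centralize the additive part. Conjugating only by translations therefore cuts the core of $U_i$ down to $\prod_{v\in S_i}\mathcal R_v\times\{1\}$, not to the identity. To show the kernel is trivial you must also conjugate by elements of $K^1_{S_i}$ that expand a chosen coordinate $K_v$; these exist precisely because $|S_i|\geq 2$.
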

\begin{proof}
For each $i=1, \dots, n$, let $S_i$ be a set of at least two valuations of $K$.  By van Dantzig's theorem, we may choose a compact open subgroup $U_i \leq G_{S_i}$ for each $i$, in such a way that the product $\prod_i U_i$ intersects trivially the cocompact lattice $\Gamma_{S_1, \dots, S_n}$ afforded by Theorem~\ref{thm:affine-arith:prod}.  By Lemma~\ref{lem:compact-gen}, the group $G_{S_i}$ is compactly generated. Let $\mathcal G_i$ be a  Cayley--Abels graph for $G_{S_i}$   with vertex set $G_{S_i}/U_i$, see Lemma~\ref{lem:CayAb}. The group $\Gamma_{S_1, \dots, S_n}$  acts properly and cocompactly on the cartesian product $\prod_i \mathcal G_i$ via its lattice embedding in $\prod_i G_{S_i}$. By construction that action is faithful, since $\Gamma_{S_1, \dots, S_n} \cap \prod_i U_i =\{e\}$. 

Let $\mathcal T_i$ be the universal covering tree of $\mathcal G_i$. Let also $\Lambda_k \leq \mathrm{Aut}(\mathcal T_1) \times \dots \times \mathrm{Aut}(\mathcal T_n)$ be the lift of the group $\Gamma_{S_1, \dots, S_n; k}=   \mathcal O_{S, k} \rtimes \big(\mathcal O_{S_1}^*  \dots \mathcal O_{S_n}^*\big)$. Hence the group $\Lambda_k$ fits in a short exact sequence 
$$1 \to \pi_1(\mathcal G_1) \times \dots \times \pi_1(\mathcal G_n) \to \Lambda_k \to \Gamma_{S_1, \dots, S_n; k} \to 1,$$ 
see \cite[Theorem~8.A.20]{CoHa}.
By construction the groups $\Lambda_k$ act geometrically on $\mathcal T_1 \times \dots  \times \mathcal T_n$, hence they form an ascending chain of cocompact lattices in $\mathrm{Aut}(\mathcal T_1) \times \dots \times \mathrm{Aut}(\mathcal T_n)$.  
The projection of $\Lambda_k$ to the factor  $ \mathrm{Aut}(\mathcal T_i)$ normalizes the fundamental group $\pi_1(\mathcal G_i)$, which is a discrete subgroup since its action on $\mathcal T_i$ by deck transformations is free and properly discontinuous. It follows that the projection of $\Lambda_k$ to  $ \mathrm{Aut}(\mathcal T_i)$ is discrete if and only if  the projection of the quotient $\Lambda_k/\pi_1(\mathcal G_i)$ to $G_{S_i}$ is discrete. This is not the case for $k=0$ by Theorem~\ref{thm:affine-arith:prod}, hence for any $k \geq 0$ since $\Lambda_0 \leq \Lambda_k$. 
\end{proof}

\section{Diestel--Leader graphs}\label{sec:DL}

At this point, it is not clear a priori what the degrees of the trees $\mathcal T_i$ in Corollary~\ref{cor:asc-chains-trees} can be, and whether the lattices in the chain act vertex-transitively on the product. We shall now address those questions, in order to establish a more precise version of Corollary~\ref{cor:asc-chains-trees} that will appear as Corollary~\ref{cor:asc-chains-trees:2} below. The first requires to construct a suitable geometric realization of the group $G_S$.

Let $d \geq 2$ be an integer. Following \cite{BNW08}, given pairs $(\mathcal T_1, \mathfrak h_1), \dots, (\mathcal T_d, \mathfrak h_d)$ consisting of a tree $\mathcal T_i$ and a Busemann function $\mathfrak h_i \colon V\mathcal T_i \to \ZZ$, one defines the \textbf{horocyclic product}  as the graph with vertex set 
$$\{(v_1, \dots, v_d) \in \prod_{i=1}^d V\mathcal T_i \mid \sum_{i=1}^d \mathfrak h_i(v_i) = 0\},$$ 
where $(v_1, \dots, v_d)$ is adjacent to $(w_1, \dots, w_d)$ if and only if there exist two distinct indices $i, j$ such that $v_i, w_i$ and $v_j, w_j$ are  adjacent pairs in $\mathcal T_i$ and $\mathcal T_j$ respectively, and $v_k = w_k$ for all $k \neq i, j$. 
For a $d$-tuple of integers $(q_1, \dots, q_d)$, we define the \textbf{Diestel--Leader graph}  $\mathsf{DL}(q_1, \dots, q_d)$ as the horocyclic product  of $d$ homogeneous trees of degree $q_1+1, \dots, q_d+1$ respectively. We also set  $\mathsf{DL}_d(q) = \mathsf{DL}(q, \dots, q) $ in case $q_1 = \dots = q_d$.

For each valuation $v$ of $K$, we may view the affine group $\mathrm{AGL}_1(K_{v})$  as a closed subgroup of $\mathrm{GL}_2(K_{v})$ by sending the pair $(b, a)$ to the matrix $\left(\begin{array}{cc} a & b \\ 0 & 1 \end{array} \right)$. The image intersects trivially the center of $\mathrm{GL}_2(K_{v})$, and thus defines a  faithful, continuous and proper action on the Bruhat--Tits tree $\mathcal T_{K_v}$ of $\mathrm{PGL}_2(K_{v})$, which is the regular tree of degree $q+1$, where $q = |\bar K_v|$. The action of $\mathrm{AGL}_1(K_{v})$ is vertex-transitive and fixes an end. The subgroup $\mathcal R_v \rtimes \mathcal R_v^*$ is the stabilizer of a vertex. 

Let now $S = \{v_1, \dots, v_d\}$ be  a set of $d$ valuations of $K$. Assume that $\deg(v_i) = e$ for all $i$, so that the residue field $|\bar K_v|$ has the same order $q= |F|^e$ for all $v \in S$.  Let $U_S \leq G_S$ be the compact open subgroup defined by 
$$U_S =\prod_{v \in S} (\mathcal R_v \rtimes \mathcal R_v^*).$$
The group $G_S \leq \prod_{v \in S} \mathrm{AGL}_1(K_{v})$ acts on the product $\prod_{v \in S} \mathcal T_{K_v}$ of the Bruhat--Tits trees, and   $U_S$ is the stabilizer of a vertex in that product. Arguing as in \cite[Proposition~2.5]{BNW08}, one sees that $G_S$ preserves the corresponding horocyclic product of those trees, and acts vertex-transitively (the Busemann function $\mathfrak h_v$ on the tree $\mathcal T_{K_v}$ is associated with the end fixed by the upper triangular subgroup of $\mathrm{GL}_2(K_{v})$). Thus we obtain the following result.

\begin{lem}\label{lem:DL}
Assume that $S = \{v_1, \dots, v_d\}$ consists of $d$ valuations of the same degree, and let $q = |\bar K_{v_i}|$ be the common order to corresponding residue fields. 
The group $G_S$ has a continuous, faithful, vertex-transitive action by automorphisms on the Diestel--Leader graph $\mathsf{DL}_d(q)$. The compact open subgroup $U_S \leq G_S$ is the stabilizer of a vertex.  
\end{lem}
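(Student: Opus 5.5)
We work inside the product of Bruhat--Tits trees $\prod_{v \in S} \mathcal T_{K_v}$. Each $\mathcal T_{K_v}$ has degree $q+1$, with $q = |\bar K_v|$. On it, $\mathrm{AGL}_1(K_v)$ acts faithfully, continuously and properly, fixes the end $\omega_v$ corresponding to the upper triangular subgroup, and has $\mathcal R_v \rtimes \mathcal R_v^*$ as the stabilizer of the standard vertex $o_v$.

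Let $\mathfrak h_v$ be the Busemann function attached to $\omega_v$, normalised by $\mathfrak h_v(o_v)=0$.

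\textbf{Step 1: effect of an element on the Busemann function.} For $g=(b,a) \in \mathrm{AGL}_1(K_v)$, the translation part fixes $\omega_v$ together with a horoball-stable structure. So I will check that
$$\mathfrak h_v(g x) = \mathfrak h_v(x) \pm v(a)$$
for every vertex $x$. The sign is fixed by the choice of orientation. The verification uses the standard description of vertices as lattice classes $[\mathcal R_v e_1 \oplus \pi^m\mathcal R_v(\ldots)]$, or equivalently as balls in $K_v$; under this description, translations preserve the horocycles and $\mathrm{diag}(a,1)$ shifts the level by $v(a)$.

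\textbf{Step 2: level on the product.} Put $e=\deg(v)$, which is the same for all $v\in S$. For $g=(g_v)\in G_S$ and $x=(x_v)$, Step 1 gives
$$\sum_v \mathfrak h_v(g_vx_v)=\sum_v \mathfrak h_v(x_v)\pm \sum_v v(a_v).$$
Because all degrees are equal, the defining condition of $K_S^1$ reads $e\sum_v v(a_v)=0$. Hence $G_S$ preserves the zero-level set, which is the vertex set of the horocyclic product. Elements of $G_S$ act as automorphisms of each tree, and they preserve the property of being a vertex of the horocyclic product; therefore they preserve the adjacency relation of $\mathsf{DL}_d(q)$. This gives a continuous action. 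It is faithful because the action of $\prod_v \mathrm{AGL}_1(K_v)$ on the product of trees is already faithful.

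\textbf{Step 3: transitivity and stabilizer.} The vertex stabilizer of $o=(o_v)$ in $G_S$ is $G_S\cap \prod_v(\mathcal R_v\rtimes\mathcal R_v^*)$. This equals $U_S$, since every element of $U_S$ has all $v(a_v)=0$ and so lies in $G_S$.

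For transitivity, take a vertex $x=(x_v)$ with $\sum_v\mathfrak h_v(x_v)=0$. I first choose $a_v\in K_v^*$ with $v(a_v)=\mathfrak h_v(x_v)$ (up to the sign convention). Then $\sum_v v(a_v)=0$, so $(a_v)\in K_S^1$, and $(0,a_v)$ moves $o$ to a vertex $y$ with $\mathfrak h_v(y_v)=\mathfrak h_v(x_v)$ in each coordinate. In each tree, the additive group $K_v$ acts transitively on every horocycle. This is the main point to verify: the vertices at a given level correspond to the balls of a fixed radius in $K_v$, which the translations permute transitively. So a translation $(b_v)\in A_S$ carries $y$ to $x$, and $G_S$ is vertex-transitive.

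The main obstacle is the bookkeeping in Step 1. I will have to fix the identification of vertices with balls of $K_v$, and the sign convention for $\mathfrak h_v$, so that the level shift is exactly $v(a)$. That level shift is what makes the condition defining $K_S^1$ match the defining equation $\sum_v \mathfrak h_v=0$ of the horocyclic product. Beyond this, I expect the argument to follow \cite[Proposition~2.5]{BNW08} with no further difficulty.
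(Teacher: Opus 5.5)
Your proposal is correct and follows the paper's route. The paper simply invokes \cite[Proposition~2.5]{BNW08} with the Busemann functions attached to the ends fixed by the upper triangular subgroups, and your Steps 1--3 supply the details behind that citation: the level shift by $\pm v(a)$, the equal-degree hypothesis turning the defining condition of $K_S^1$ into $\sum_v v(a_v)=0$, scaling then translating for transitivity, and $U_S \subseteq G_S$ for the stabilizer.

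One justification needs tightening: faithfulness on $\prod_v \mathcal T_{K_v}$ does not by itself pass to the invariant subset $\mathsf{DL}_d(q)$. Add that, since $d\geq 2$, every vertex of each $\mathcal T_{K_v}$ is a coordinate of some vertex of the horocyclic product (compensate its level in another coordinate). Hence an element acting trivially on $\mathsf{DL}_d(q)$ acts trivially on each tree, and so is trivial. The hypothesis $d \geq 2$ is genuinely needed here: for $d=1$ faithfulness fails, since the additive subgroup $\mathcal R_v$ fixes every vertex of the horocycle through $o_v$.
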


In view of this geometric realization, Corollary~\ref{cor:affine:chain} provides an ascending chain of vertex-transitive lattices in $\Aut(\mathsf{DL}_d(q))$. Next we construct an  affine arithmetic group $\Lambda_S$, with the aim to show that it admits $\mathsf{DL}_d(q)$ as  a Cayley graph (see Proposition~\ref{prop:affine:Cayley} below). This sheds some arithmetic light on  the discussion of \cite[\S 3]{BNW08}.  

We will now focus on the   rational function field   $K = F(t)$   in the indeterminate $t$. We recall from \cite[Chapter~III, Theorem~2]{Weil} that every discrete valuation  $v \colon K \to \mathbf Z \cup \{\infty\}$ is equivalent to one of the following:
\begin{itemize}
\item The $f$-adic valuation $v_f$ associated to a monic irreducible polynomial $f \in F[t]$ of degree~$\geq 1$, determined by the condition that $v_f(f)= 1$.
\item The valuation $v_\infty$,  determined by the condition that $v_\infty(t)= -1$. 
\end{itemize}

The valuation $v_\infty$ is the unique valuation, up to equivalence, such that $v_\infty(t)< 0$. We have $v_\infty(f) =  -  \deg(f)$ for all non-zero $f \in F[t]$. 

%

\begin{prop}\label{prop:affine:Cayley}
Let $K = F(t)$ and $S \subset \mathscr P$ be a non-empty finite set of places. 
Then $G_S=  \Gamma_S U_S$ and $\Gamma_S \cap U_S \cong F\rtimes F^*$.  
In particular, if   $S$ consists of $d \geq 2$ valuations of the same degree $e$,  then $\Gamma_S $ acts vertex-transitively on $\mathsf{DL}_d(q)$, where $q = |F|^e$. 

Moreover,  if $v_0 \in \mathscr P \setminus  S$ has degree one, the group $\Lambda_S = \mathcal O_{S, -1} \rtimes T_S$ (where $ \mathcal O_{S, -1}$ is defined as in \S\ref{sec:towers:irred} and $T_S$ is as in (\ref{eq:T_S})), acts regularly on the vertices of $\mathsf{DL}_d(q)$. In particular $\Lambda_S $ has $\mathsf{DL}_d(q)$ as a Cayley graph. 
\end{prop}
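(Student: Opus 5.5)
The plan is to get both assertions about $\Gamma_S$ from the additive and multiplicative decompositions in Propositions~\ref{prop:additive-lattice} and~\ref{prop:mult-lattice}, with $S'=\emptyset$. After that, the regular action of $\Lambda_S$ will come from a count of the stabilizer.

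For $G_S=\Gamma_S U_S$, write an element of $G_S$ as $(b,a)$ with $b\in A_S$ and $a\in K^1_S$. First I use $K^1_S=\mathcal O_S^*R_S$, valid since $K=F(t)$. This gives $a=\gamma r$ with $\gamma\in\mathcal O_S^*$ and $r\in R_S$. Multiplying on the left by $(0,\gamma)^{-1}\in\Gamma_S$ reduces to the case $a\in R_S$. Next I use $\mathcal O_S+R=A_S$ to write $b=x+y$ with $x\in\mathcal O_S$ and $y\in\prod_{v\in S}\mathcal R_v$. Then
$$(b,a)=(x,1)(y,a), \qquad (y,a)\in U_S,$$
and the decomposition follows. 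For the intersection, an element of $\Gamma_S\cap U_S$ has additive part in $\mathcal O_S\cap R=F$ and multiplicative part in $\mathcal O_S^*\cap R_S=F^*$. Hence $\Gamma_S\cap U_S=F\rtimes F^*$. Vertex-transitivity on $\mathsf{DL}_d(q)$ then follows from Lemma~\ref{lem:DL}: $U_S$ is a vertex stabilizer and $G_S=\Gamma_S U_S$, so every vertex $gU_S$ can be written as $\gamma U_S$.

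For $\Lambda_S$, fix $v_0$ of degree one, say $v_0=v_f$ with $f=t-c$, or $v_\infty$. First I identify $\mathcal O_{S,-1}$: it is the set of $x\in\mathcal O_S$ with $v_0(x)\ge 1$, an index-$|F|$ subgroup of $\mathcal O_S$, since $\bar K_{v_0}=F$. Reduction modulo $v_0$ maps $\mathcal O_S$ onto $F$, because the constants already surject, so $\mathcal O_S=F\oplus\mathcal O_{S,-1}$. Also $\mathcal O_{S,-1}\cap F=0$. Moreover $\mathcal O_{S,-1}$ is $\mathcal O_S^*$-stable, because units have $v_0$-valuation $0$. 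Therefore $\Lambda_S$ is a subgroup of $\Gamma_S$, and $\Gamma_S=\Lambda_S\cdot(F\rtimes F^*)$ with $\Lambda_S\cap(F\rtimes F^*)=1$.

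I prove this last factorization concretely. Take $(x,u)\in\Gamma_S$ and write $u=\zeta\tau$ with $\zeta\in F^*$ and $\tau\in T_S$, using~(\ref{eq:T_S}), and $x=c+y$ with $c\in F$ and $y\in\mathcal O_{S,-1}$. The goal is to express $(x,u)$ as $\lambda\cdot(c',\zeta')$ with $\lambda\in\Lambda_S$. Compute
$$(y',\tau)(c',\zeta)=(y'+\tau c',\tau\zeta).$$
Reduction mod $v_0$ of $\tau c'$ is $\bar\tau c'$, and $\bar\tau\in F^*$, so I can solve $c'=\bar\tau^{-1}c$ and $y'=x-\tau c'\in\mathcal O_{S,-1}$. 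Triviality of the intersection is immediate: an element of $\Lambda_S\cap(F\rtimes F^*)$ has additive part in $\mathcal O_{S,-1}\cap F=0$ and multiplicative part in $T_S\cap F^*=1$.

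Combining with the first part gives $G_S=\Lambda_S U_S$ and $\Lambda_S\cap U_S=\Lambda_S\cap\Gamma_S\cap U_S=1$. So $\Lambda_S$ acts regularly on the vertices of $\mathsf{DL}_d(q)$. A graph with a group acting regularly on its vertices is a Cayley graph of that group, by Sabidussi's criterion, with generating set the elements sending a base vertex to its neighbours; this gives the final statement.

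The main obstacle is the factorization step, specifically making sure that $T_S$, not the full $\mathcal O_S^*$, still yields a complement. This works because $\bar\tau$ lands in $F^*$ and $F$ absorbs the adjustment. The case $v_0=v_\infty$ needs the same check with reduction at infinity, where the residue field is again $F$.
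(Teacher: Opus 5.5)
Your proof is correct and uses the same ingredients as the paper: $\mathcal O_S+R=A_S$, $\mathcal O_S\cap R=F$, $K_S^1=\mathcal O_S^*R_S$ and $\mathcal O_S^*\cap R_S=F^*$ from Propositions~\ref{prop:additive-lattice} and~\ref{prop:mult-lattice}, together with $\mathcal O_S=F\oplus\mathcal O_{S,-1}$ and $\mathcal O_S^*=F^*\times T_S$; the only difference is that the paper assembles these directly in $G_S$ into the complements $A_S=\mathcal O_{S,-1}\oplus R$ and $K_S^1=T_SR_S$ (with $T_S\cap R_S=1$), whereas you factor through $\Gamma_S=\Lambda_S\cdot(F\rtimes F^*)$. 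Incidentally, the residue computation with $\bar\tau$ that you flag as the main obstacle is unnecessary: $(x,u)=(c,\zeta)(\zeta^{-1}y,\tau)$ gives $\Gamma_S=(F\rtimes F^*)\Lambda_S$ at once, and taking inverses yields $\Gamma_S=\Lambda_S(F\rtimes F^*)$.
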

\begin{proof}
The first  assertions follow from Propositions~\ref{prop:additive-lattice} and~\ref{prop:mult-lattice}. The second assertion is then a consequence of Lemma~\ref{lem:DL}. 

Notice that $F$ is a subgroup of $\mathcal O_{S}$ that has a trivial intersection with $\mathcal O_{S, -1}$ since $v_0(x) = 0$ for all $x \in F$. Thus $\mathcal O_{S, -1} + F \cong \mathcal O_{S, -1} \oplus F$. Since $\deg(v_0) = 1$, we have $[\mathcal O_{S}: \mathcal O_{S, -1}]= |F|$.  Therefore $\mathcal O_{S} = \mathcal O_{S, -1} + F$. Set $R = \prod_{v \in S} \mathcal R_v$. 

By Proposition~\ref{prop:additive-lattice} we have $A_S  = \mathcal O_S + R = \mathcal O_{S, -1} + F + R = \mathcal O_{S, -1} + R$ since $F $ is contained in $R$. Moreover $O_{S} \cap R = F$, so that $O_{S, -1} \cap R = \{0\}$. 

Set $R_S^* = \prod_{v \in S} \mathcal R_v^* \leq K_S^1$. 
Since $F^* \leq R_S$, Proposition~\ref{prop:mult-lattice} yields $K_S^1 = \mathcal O_S^* R_S = T_SF^* R_S = T_S R_S$ and $T_S \cap R_S^*  = \{1\}$. 

Combining those observations, we infer that $\Lambda_S \cap U_S =\{e\}$ and $\Lambda_S U_S = G_S$. The conclusion follows.
\end{proof}

\begin{remark}\label{rem:explicit-gen-set}
In the special case where   $S = \{v_{t+a} \mid a \in F_0\}$ consists of   $f$-adic valuations defined by degree one polynomials of the form $f= t+a$, where $a$ belongs to some fixed subset $F_0 \subseteq F$, and where the degree one valuation $v_0 \not \in S$ required by Proposition~\ref{prop:affine:Cayley} is defined to be $v_\infty$, it is not difficult to find explicitly the set of those elements of  $\Lambda_S$ that send the base vertex of  $\mathsf{DL}_d(q)$ to each of its neighbors: it is given by
$$\Sigma_S =  \big\{(\frac x {t+b}, \frac{t+a}{t+b}) \mid x \in F, \  a \neq b \in F_0\big\},$$
where the elements of $\Lambda_S \leq K \rtimes K^*$ are represented by ordered pairs $(y, z)$. Since 
$$(\frac x {t+b}, \frac{t+a}{t+b})^{-1} =  (\frac {-x} {t+a}, \frac{t+b}{t+a}),$$
the generating set $\Sigma_S$ is symmetric. Thus   Proposition~\ref{prop:affine:Cayley} implies that the Cayley graph of $\Lambda_S$ with respect to the generating set $\Sigma_S$ is isomorphic to $\mathsf{DL}_d(q)$ . This can also be derived from \cite[Theorem~3.6]{BNW08} (using the fact that $\Aut(K)$ acts $2$-transitively on the degree one valuations of $K$). 
\end{remark}

\begin{remark}\label{rem:lamplighter}
Retain the notation of Remark~\ref{rem:explicit-gen-set}, and assume in addition that $|F_0| = 2$. Then   the group $\Lambda_S$ is isomorphic to the lamplighter group $F \wr \ZZ$, see \cite[Corollary~(3.14)]{BNW08}. 
\end{remark}

We may now extend the construction in Proposition~\ref{prop:affine:Cayley} to obtain lattices acting regularly on products of Diestel--Leader graphs. 
	
\begin{prop}\label{prop:regular-action-prod}
Let $K = F(t)$ and $S_1, \dots, S_n \subset \mathscr P$ be disjoint finite sets each consisting of at least two valuations that are all of the same degree $e$. Set $q = |F|^e$ and $d_i = |S_i|$ for all $i$. Let $S = \bigsqcup_{i=1}^n S_i$ and let also $v_0  \in \mathscr P \setminus S$ be a degree one valuation not contained in $S$. Then the following assertions hold. 
\begin{enumerate}[label=(\roman*)]
\item The group $\Lambda_{S_1, \dots, S_n} = \mathcal O_{S, -1} \rtimes \big( T_{S_1} \dots T_{S_n} \big) \leq \prod_{i=1}^n \Aut(\mathsf{DL}_{d_i}(q))$ acts regularly on the vertices of $\prod_{i=1}^n \mathsf{DL}_{d_i}(q)$. 
\item The projection of $\Lambda_{S_1, \dots, S_n}$ to every subproduct $\prod_{i \neq k} \mathrm{Aut}(\mathsf{DL}_{d_i}(q))$ is non-discrete for all $k$. 
\item For each $k \in \{1, \dots, n\}$, the stabilizer of  the $(n-1)$-tuple  $(v_1, \dots , \hat{v_k}, \dots, v_n)$ is isomorphic to $\Lambda_{S_k}$.  

\end{enumerate}
	
\end{prop}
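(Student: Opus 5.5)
The plan is to reduce everything to the single-factor statements of Proposition~\ref{prop:affine:Cayley} together with the product lattice result Theorem~\ref{thm:affine-arith:prod}. By Lemma~\ref{lem:DL}, each $G_{S_i}$ acts faithfully and vertex-transitively on $\mathsf{DL}_{d_i}(q)$ with vertex stabilizer $U_{S_i}$. Hence $G=\prod_i G_{S_i}$ acts on $\prod_i \mathsf{DL}_{d_i}(q)$ with vertex stabilizer $U=\prod_i U_{S_i}=U_S$. Regularity in (i) is then equivalent to the two identities
$$\Lambda_{S_1,\dots,S_n}\cap U=\{e\}\qquad\text{and}\qquad \Lambda_{S_1,\dots,S_n}U=G.$$

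For (i), I will split into the additive and multiplicative parts, as in the proof of Proposition~\ref{prop:affine:Cayley}. The additive part of $G$ is $A_S=\prod_{v\in S}K_v$, and $\mathcal O_{S,-1}$ sits in it diagonally. That proof already gives $A_S=\mathcal O_{S,-1}+R$ and $\mathcal O_{S,-1}\cap R=\{0\}$, using $\deg v_0=1$ and Proposition~\ref{prop:additive-lattice} for $K=F(t)$. The multiplicative part of $G$ is $\prod_i K^1_{S_i}$, whose subgroup $\prod_i R_{S_i}$ equals $R_S$. Proposition~\ref{prop:mult-lattice}, applied to each $S_i$ separately, gives $K^1_{S_i}=T_{S_i}R_{S_i}$ and $T_{S_i}\cap R_{S_i}=\{1\}$. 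Here I must choose $T_{S_i}$ so that $T_{S_1}\cdots T_{S_n}$ embeds in $\prod_i K^1_{S_i}$. This holds because an element of $\mathcal O^*_{S_i}$ has valuation $0$ at every place outside $S_i$, so its components at the other $S_j$ lie in $R_{S_j}$.

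To conclude (i), I compute in the quotient $\prod_i K^1_{S_i}/R_S\cong\prod_i \ZZ^{|S_i|-1}$. There $T_{S_i}$ maps isomorphically onto the $i$-th factor and trivially onto the others. So the product $T_{S_1}\cdots T_{S_n}$ is direct, maps isomorphically onto the quotient, and meets $R_S$ trivially. Combining the additive and multiplicative parts through the semidirect structure, exactly as at the end of the proof of Proposition~\ref{prop:affine:Cayley}, gives both identities above.

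Part (ii) follows from Theorem~\ref{thm:affine-arith:prod}(ii) once I show that $\Lambda=\Lambda_{S_1,\dots,S_n}$ has finite index in $\Gamma_{S_1,\dots,S_n}$. The index $[\mathcal O_S:\mathcal O_{S,-1}]=|F|$ is finite. Also $\mathcal O^*_{S_i}=F^*\times T_{S_i}$, so $T_{S_1}\cdots T_{S_n}$ has finite index in $\mathcal O^*_{S_1}\cdots\mathcal O^*_{S_n}$. A finite-index subgroup of a group with non-discrete projection still has non-discrete projection, since a subgroup of finite index in a non-discrete group is non-discrete. The projections to each $\Aut(\mathsf{DL}_{d_i}(q))$ go through the faithful continuous maps $G_{S_i}\to\Aut(\mathsf{DL}_{d_i}(q))$. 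I expect this transfer to be the main subtle point: I need the image of $G_{S_i}$ to be closed, with a proper action, so that non-discreteness in $G_{S_i}$ gives non-discreteness in $\Aut$. Properness of the action, with compact open stabilizer $U_{S_i}$, supplies this.

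For (iii), regularity reduces the question to computing the stabilizer of the base vertices $(v_i)_{i\neq k}$. This stabilizer is the set of $(y,z)\in\Lambda$ with $(y,z)\in U_{S_i}$ for all $i\neq k$. That means $v(y)\ge0$ and $v(z)=0$ for all $v\in S\setminus S_k$, which places $y\in\mathcal O_{S_k\cup\{v_0\}}$. The condition $v_0(y)\ge1$ then gives $y\in\mathcal O_{S_k,-1}$. For the multiplicative part, write $z=\prod_j t_j$ with $t_j\in T_{S_j}$. Its valuations at $S_j$ for $j\ne k$ are those of $t_j$, so they vanish only if $t_j=1$, by the same freeness argument as in (i). Thus the stabilizer is $\mathcal O_{S_k,-1}\rtimes T_{S_k}=\Lambda_{S_k}$.
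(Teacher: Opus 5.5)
Your proof is correct and follows the paper's route. Regularity is reduced to $\Lambda\cap U_S=\{e\}$ and $\Lambda U_S=G$, split into additive and multiplicative parts as in Proposition~\ref{prop:affine:Cayley}; (ii) comes from Theorem~\ref{thm:affine-arith:prod}(ii) because $\Lambda_{S_1,\dots,S_n}$ has finite index in $\Gamma_{S_1,\dots,S_n}$; and (iii) is obtained by intersecting with $\prod_{i\neq k}U_{S_i}$ part by part. You also supply details the paper leaves implicit, notably that $T_{S_1}\cdots T_{S_n}$ is a complement to $R_S$ in $\prod_i K^1_{S_i}$, which you check in the quotient $\prod_i K^1_{S_i}/R_S$.

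The ``subtle point'' you flag in (ii) is not actually one. Any continuous injective homomorphism sends a non-discrete subgroup to a non-discrete subgroup, so you do not need closedness of the image or properness of the action there.
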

\begin{proof}
By construction, the group $\Lambda_{S_1, \dots, S_n}$ is contained as a finite index subgroup in the group 
$\Gamma_{S_1, \dots, S_n} = \mathcal O_S \rtimes \big(\mathcal O_{S_1}^*  \dots \mathcal O_{S_n}^*\big)$ considered in Section~\ref{sec:towers:irred}. In particular $\Lambda_{S_1, \dots, S_n}$ is a cocompact lattice in $\prod_{i=1}^n G_{S_i}$, see Theorem~\ref{thm:affine-arith:prod}.    The group $G_{S_i}$ acts vertex-transitively on $\mathsf{DL}_{d_i}(q)$, and the vertex stabilizers are conjugate to $U_{S_i}$. Hence $\prod_{i=1}^n G_{S_i}$ acts vertex-transitively on $\prod_{i=1}^n \mathsf{DL}_{d_i}(q)$, and vertex stabilizers are conjugate to $\prod_{i=1}^n U_{S_i} = U_S$. Now the assertion~(i) follows as   in the proof of Proposition~\ref{prop:affine:Cayley}, and the assertion~(ii) is immediate from Theorem~\ref{thm:affine-arith:prod}(ii). 

For (iii), we need to show that the intersection $\Lambda_{S_1, \dots, S_n} \cap \big(\prod_{i\neq k} U_{S_i} \big)$ coincides with $\Lambda_{S_i}$. One sees that this is indeed the case by considering separately the additive and the multiplicative parts. 
\end{proof}



We let $T_k$ denote the $k$-regular tree. The following result is a version of Corollary~\ref{cor:asc-chains-trees} that brings more precise information on the degrees of the trees and ensures vertex-transitivity. 

\begin{cor}\label{cor:asc-chains-trees:2}
Let $q$ be a prime power. 
\begin{enumerate}[label=(\roman*)]
\item For each $n \geq 1$, there is a sequence $(q_1 = q, q_2, \dots, q_n)$ of powers of $q$ such that the product $\Aut(T_{2q_1}) \times \dots \times \Aut(T_{2q_n})$ contains an infinite strictly ascending chain of cocompact vertex-transitive lattices with non-discrete projections on any sub-product.

\item If $q \geq 3$, then the product $\Aut(T_{2q}) \times \Aut(T_{2q})$ contains an infinite strictly ascending chain of cocompact vertex-transitive lattices with non-discrete projections on both factors. 

\end{enumerate}
\end{cor}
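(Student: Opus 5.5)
The plan is to realise everything on products of two-dimensional Diestel--Leader graphs and then pass to universal covers, exactly as in the proof of Corollary~\ref{cor:asc-chains-trees}.

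\emph{Step 1: the graphs and their covers.} In $\mathsf{DL}_2(q')$, a vertex $(v_1,v_2)$ has exactly $2q'$ neighbours. Indeed, one moves up in one tree (one choice) and down in the other ($q'$ choices), in either order. Hence $\mathsf{DL}_2(q')$ is $2q'$-regular, connected and not a tree, so its universal covering tree is $T_{2q'}$.

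\emph{Step 2: choice of places for (i).} Take $K=F(t)$ with $|F|=q$ and fix a degree-one place $v_0$, say $v_\infty$. For each $i$ choose a pair $S_i$ of places of a common degree $e_i$, with the $S_i$ pairwise disjoint and not containing $v_0$. Set $q_i=q^{e_i}$. We can take $e_1=1$, since $q\ge 2$ leaves at least two degree-one places other than $v_\infty$. For $i\ge 2$, choose the $e_i$ large enough that there are enough places of degree $e_i$; there are infinitely many irreducible monic polynomials overall.

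Proposition~\ref{prop:regular-action-prod} is stated with a common degree $e$. However, its proof, via Propositions~\ref{prop:additive-lattice}, \ref{prop:mult-lattice}, \ref{prop:affine:Cayley} and Lemma~\ref{lem:DL}, only uses that the places inside each single $S_i$ have equal degree. So I will check that the same argument gives the following: $\Lambda=\mathcal O_{S,-1}\rtimes(T_{S_1}\cdots T_{S_n})$ acts regularly on $\prod_i\mathsf{DL}_2(q_i)$, and its projections to proper subproducts are non-discrete. This verification is the main point to be careful about.

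\emph{Step 3: the chain, lifted to trees.} Put $\Lambda_k=\mathcal O_{S,k-1}\rtimes(T_{S_1}\cdots T_{S_n})$ for $k\ge 0$. By the argument of Theorem~\ref{thm:affine-arith:prod}(iii), these form a strictly ascending chain of cocompact lattices in $\prod_i G_{S_i}$, each with finite index in the next. Let $\widetilde\Lambda_k\le\prod_i\Aut(T_{2q_i})$ be the group of all lifts of the image of $\Lambda_k$, as in Corollary~\ref{cor:asc-chains-trees}. It is an extension of the image of $\Lambda_k$ by $\prod_i\pi_1(\mathsf{DL}_2(q_i))$, acts properly and cocompactly, and the chain $\widetilde\Lambda_k$ is strictly ascending.

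A possibly non-faithful action of $\Lambda_k$ for $k\ge1$ causes no trouble. Indeed, $\Lambda_k\cap U_S$ is finite, so the image is still a lattice, and the indices remain strictly larger than one because they count orbits of vertex stabilisers. Vertex-transitivity holds already for $\widetilde\Lambda_0$: $\Lambda_0$ is transitive downstairs, and the deck group is transitive on each fibre. Non-discreteness of the projections to any subproduct follows as in Corollary~\ref{cor:asc-chains-trees}. The deck groups are discrete and normal, which reduces the question to non-discreteness in $\prod_{i\in I}G_{S_i}$, and that is Theorem~\ref{thm:affine-arith:prod}(ii). For $n=1$ the statement reduces to Corollary~\ref{cor:affine:chain} with $|S_1|=2$, lifted in the same way.

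\emph{Part (ii).} Here we need $S_1$ and $S_2$ to consist of four degree-one places, and $F(t)$ has $q+1\ge 4$ of them. We then do not insist on a degree-one $v_0$ or on regularity. Instead we use $\Gamma_{S_1,S_2}$ itself, which is vertex-transitive on $\mathsf{DL}_2(q)\times\mathsf{DL}_2(q)$ because $G_{S_i}=\Gamma_{S_i}U_{S_i}$ by Proposition~\ref{prop:affine:Cayley}. This requires a short check that the multiplicative parts $\mathcal O^*_{S_1}\mathcal O^*_{S_2}$ surject onto $\prod_i K^1_{S_i}/R_{S_i}$, which holds since each $\mathcal O^*_{S_i}$ does so on its own factor. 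The chain is built with a place $v_0$ of degree $\ge 2$ outside $S$, via Theorem~\ref{thm:affine-arith:prod}(iii). We then lift to $T_{2q}\times T_{2q}$ as in Step~3.
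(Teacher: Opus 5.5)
Your proposal is correct and follows the paper's proof. Over $K=F(t)$ you realise the lattices on products of graphs $\mathsf{DL}_2(q_i)$, get vertex-transitivity from the regular lattice of Proposition~\ref{prop:regular-action-prod}, build the chain with an extra place $v_0$ as in Theorem~\ref{thm:affine-arith:prod}(iii), and lift to the universal covering trees $T_{2q_i}$ as in Corollary~\ref{cor:asc-chains-trees}.

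You are also more careful than the paper on two points it passes over. First, Proposition~\ref{prop:regular-action-prod} is only needed with equal degrees inside each $S_i$. Second, in (ii) with $q=3$ all four degree-one places lie in $S_1\cup S_2$, so no degree-one $v_0$ is left for $\Lambda_{S_1,S_2}$, and vertex-transitivity of $\Gamma_{S_1,S_2}$ must be checked directly, as you do.
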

\begin{proof}
Let $F$ be the field of order $q$ and $K = F(t)$. 

(i) We choose a pair $S_1$ of distinct valuations of degree $1$ and, for each $i=2, \dots, n$, a pair $S_i$ of distinct valuations of the same degree $e_i$, in such a way that $S_1, \dots, S_n$ are  pairwise disjoint.

By Theorem~\ref{thm:affine-arith:prod}, the product $G_{S_1} \times \dots \times G_{S_n}$ has an ascending chain of lattices $\Gamma_0 < \Gamma_1 < \dots$ with non-discrete projections.  By Lemma~\ref{lem:DL}, the group $G_{S_i}$ has a continuous, faithful, proper and vertex-transitive action on the Diestel--Leader graph $\mathsf{DL}_2(q_i)$, where $q_i = q^{e_i}$.  Since $\mathsf{DL}_2(q_i)$  is regular of degree $2q_i$, its universal cover is the regular tree  $T_{2q_i}$. Moreover $\Gamma_0 = \Gamma_{S_1, \dots, S_n}$ contains the lattice $\Lambda_{S_1, \dots, S_n}$ from Proposition~\ref{prop:regular-action-prod}, and is thus vertex-transitive on the product $\prod_{i=1}^n \mathsf{DL}_2(q_i)$. The conclusion follows as in Corollary~\ref{cor:asc-chains-trees} by lifting each lattice $\Gamma_k \leq G_{S_1} \times \dots \times G_{S_n} \leq \Aut(\mathsf{DL}_2(q_1)) \times \dots \times \Aut(\mathsf{DL}_2(q_n))$ to the universal covering trees.

(ii)
The condition that $q \geq 3$ ensures that $K$ possesses at least $4$ discrete valuations of degree~$1$. Thus it is possible to take $n=2$ and $e_2 = 1$ in the construction from (i). Assertion (ii) follows.
\end{proof}

\begin{remark}
The proof of Corollary~\ref{cor:asc-chains-trees:2} provides  information on the possible values of the prime powers $q_2, \dots, q_n$ allowed by the construction. We do not know if ascending chains of irreducible vertex-transitive lattices exist in the product $\Aut(T_{d_1}) \times \dots \times \Aut(T_{d_n})$ for \emph{all} $n$-tuple $(d_1, \dots, d_n)$ of integers $d_i \geq 3$. 
\end{remark}


\section{Bireversible automata}\label{sec:automata}

We shall now exploit the relation between lattices in products of two trees and bi-reversible automata that was introduced and described by Glasner--Mozes~\cite{GM05}. We review basic definitions and  refer  to the book \cite{Nekra} for detailed information on groups defined by automata. 

A \textbf{Mealy automaton} is a tuple $\mathcal M = (Q, L, \lambda, \rho)$ consisting of finite sets $Q$ and $L$ respectively called the \textbf{set of states} and the \textbf{alphabet}, and maps $\lambda  \colon Q \times L \to L$ and $\rho \colon Q \times L \to Q$ respectively called the \textbf{output map} and the \textbf{transition map}. The automaton $\mathcal M$ is called \textbf{invertible} if  for all $q \in Q$, the map  $ L \to L : \ell \mapsto \lambda(q, \ell)$ is a bijection. It is  called \textbf{reversible} if  for all $\ell \in L$ the map $Q \to Q : q \mapsto \rho(q, \ell)$ is a bijection. We say that $\mathcal M$ is   \textbf{bi-reversible} if it is both invertible and reversible, and if moreover the map $Q \times L \to  L\times Q : (q, \ell) \mapsto \big(\lambda(q, \ell), \rho(q, \ell) \big)$ is a bijection. 

If the Mealy automaton $\mathcal M$ is invertible, it defines a  group denoted by $G_{\mathcal M}$ that acts on the regular rooted tree, whose vertices are naturally in bijection with the free monoid generated by the alphabet $L$, see \cite[Def.~1.5.7]{Nekra} or \cite[Def.~1.1]{Fra23}. 

Glasner--Mozes have obtained the first examples of groups with Kazhdan's property (T) that are defined by a bi-reversible automaton, see \cite[Prop.~3.7]{GM05}. To achieve this, they first construct a lattice acting regularly on a product of two graphs (in their case, those are $1$-skeleta of Euclidean buildings of type~$\tilde A_2$), such that the stabilizer of a vertex in the second factor has property (T). Passing to the universal covering trees, one obtains an irreducible lattice in a product of trees, and the existence of the required bi-reversible automaton follows from general principles described in \cite{GM05}. The proof of Theorem~\ref{thm:bi-rev-autom} follows the same scheme; the only difference is that we use the vertex-regular lattices from  Proposition~\ref{prop:regular-action-prod} as a basic input, instead of a lattice in a product of two buildings of type~$\tilde A_2$. 
We shall actually provide an explicit description of the bi-reversible automata enjoying the properties asserted by Theorem~\ref{thm:bi-rev-autom}.  	

For the rest of this section, we fix integers $m_1, m_2 \geq 1$. 

Let  $q$ be a prime power with $q \geq m_1+m_2+2$, and $F$   be a finite field of order $q$. Let $F_1, F_2 \subset F$ be two disjoint subsets with $|F_i| = m_i+1$. For $i=1, 2$, we denote by $E_i$ the set of all  pairs of distinct elements of $F_i$. Hence $|E_i| = \frac{m_i(m_i+1)} 2$. We choose once and for all an ordering on each pair in $E_1$ and $E_2$. Thus we shall henceforth view the element of $E_i$ as ordered pairs of distinct elements of $F_i$.

For each $(a, b) \in E_i$, we choose a copy of $F$ that we denote by  $F_{(a, b)}$, and a bijection  $\sigma_{(a, b)} \colon F \to F_{(a, b)}$. We then set 
$$Q = \{ \sigma_{(a, b)}(x) \mid (a, b) \in E_1, \ x \in F\} = \bigsqcup_{(a, b) \in E_1} F_{(a, b)}$$
and
$$ L  =  \{ \sigma_{(c, d)}(y) \mid (c, d) \in E_2, \ y \in F\}= \bigsqcup_{(c, d) \in E_2} F_{(c, d)}.$$
We define a transition map $\rho  \colon Q \times L \to Q$ by setting 
$$
\rho\big( \sigma_{(a, b)}(x),   \sigma_{(c, d)}(y)\big) =  \sigma_{(a, b)}\left(\frac{a-d}{a-c} x + \frac{b-a}{a-c}y \right)
$$
and an output map $\lambda \colon Q \times L \to L$ by 
$$
\lambda\big( \sigma_{(a, b)}(x),   \sigma_{(c, d)}(y)\big) =  \sigma_{(c, d)}\left(\frac{c-d}{a-c} x + \frac{b-c}{a-c}y \right)
$$
for all $(a, b) \in E_1$, $(c, d) \in E_2$ and all $x, y \in F$. 
Finally, we let 
$$\mathcal B_{m_1, m_2; q} = (Q, L, \lambda, \rho)$$ 
be the  Mealy automaton defined by this $4$-tuple. 

We consider also the global field $K = F(t)$. For $i=1, 2$, we set 
$$S_i = \{v_{t+a} \mid a \in F_i\}$$ 
be the set of $f$-valuations of $K$ defined by the degree one polynomials $f=t+a$ with $a \in F_i$. We can now state the following result, that is a more precise version of Theorem~\ref{thm:bi-rev-autom} from the introduction. 

\begin{thm}\label{thm:bi-rev-autom:full}
Let $m_1, m_2 \geq 1$ be integers and $q$ be a prime power with $q \geq m_1+m_2+2$. Let $\mathcal B = \mathcal B_{m_1, m_2; q} $ be the automaton defined as above, and retain the associated notation. 
\begin{enumerate}[label=(\roman*)]
\item $\mathcal B$ is bi-reversible. 

\item The group $G_{\mathcal B}$ is isomorphic to the group $\Lambda_{S_1}$ from Proposition~\ref{prop:affine:Cayley}. In particular it is of type $\mathrm{F}_{m_1}$ but not of type $\mathrm{FP}_{m_1+1}$.

\item The group $G_{\mathcal B^*}$ is isomorphic to $\Lambda_{S_2}$.  In particular it is of type $\mathrm{F}_{m_2}$ but not of type $\mathrm{FP}_{m_2+1}$.
\end{enumerate}
	
\end{thm}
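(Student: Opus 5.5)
The plan is to realise $\mathcal B$ as the Glasner--Mozes automaton of the vertex-regular lattice $\Lambda = \Lambda_{S_1, S_2}$ from Proposition~\ref{prop:regular-action-prod}, taken with $v_0 = v_\infty$. This is a degree one valuation outside $S = S_1 \sqcup S_2$, and $S_1, S_2$ are disjoint because $F_1 \cap F_2 = \emptyset$; the hypothesis $q \geq m_1+m_2+2$ is exactly what makes room for the disjoint subsets $F_1, F_2$ of sizes $m_1+1$ and $m_2+1$. By Proposition~\ref{prop:regular-action-prod}(i), $\Lambda$ acts regularly on the vertices of $\mathsf{DL}_{m_1+1}(q) \times \mathsf{DL}_{m_2+1}(q)$. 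By Remark~\ref{rem:explicit-gen-set}, the elements sending the base vertex to its neighbours in the $i$-th factor form the symmetric set $\Sigma_{S_i}$. Each inverse pair in $\Sigma_{S_i}$ contains exactly one element $(x/(t+b), (t+a)/(t+b))$ with $(a,b)$ in the chosen orientation of $E_i$. I therefore identify
\[
\sigma_{(a,b)}(x) \in Q \quad\text{with}\quad \Big(\frac{x}{t+b}, \frac{t+a}{t+b}\Big) \in \Sigma_{S_1},
\]
and similarly identify $L$ with a half of $\Sigma_{S_2}$.

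The first key step is the commutation rule. Regularity of the product action gives, for every $s \in \Sigma_{S_1}$ and $r \in \Sigma_{S_2}$, a unique factorisation $sr = r's'$ with $r' \in \Sigma_{S_2}$ and $s' \in \Sigma_{S_1}$: this is the unique square of the product complex through the base vertex with the given two edges. I will compute this factorisation directly in $K \rtimes K^*$, using $(u,\alpha)(w,\beta) = (u+\alpha w, \alpha\beta)$. The multiplicative parts force $s'$ to carry the same pair $(a,b)$ as $s$, and $r'$ the same pair $(c,d)$ as $r$. The additive parts reduce to comparing coefficients of a degree one polynomial identity in $t$ (coefficient of $t$ and constant term), giving a $2\times 2$ linear system for the new labels $(x', y')$. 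Its solution should be the pair of formulas defining $\rho$ and $\lambda$.

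I expect matching the conventions here to be the main (though elementary) obstacle. A quick trial with $sr = r's'$ produces denominators $b-c$ rather than $a-c$. So I will try the other placements, namely $sr$ versus $rs$, reading the states on the left or on the right, and possibly replacing a generator by its inverse, until the formulas of $\mathcal B_{m_1,m_2;q}$ appear exactly. Once $\mathcal B$ is identified with the square structure of the regular lattice, bi-reversibility (i) follows formally, as in \cite{GM05}. Indeed, the map $(q,\ell) \mapsto (\lambda(q,\ell), \rho(q,\ell))$ is a bijection because squares can be read from either corner. Invertibility and reversibility follow because, in each factor, the link of a vertex in the square complex is a complete bipartite graph. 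Independently, for fixed $(a,b),(c,d)$ each map is affine in $x$ or in $y$ with nonzero coefficient, since $a \neq c$, $a \ne d$ and $b \ne c$ all hold by disjointness of $F_1$ and $F_2$; this confirms the bijectivity assertions directly.

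For (ii), the group $G_{\mathcal B}$ is the image of the subgroup of $\Lambda$ generated by $\Sigma_{S_1}$ acting on the rooted tree of words in $L$. By Glasner--Mozes, that rooted tree is the universal cover $\mathcal T_2$ of $\mathsf{DL}_{m_2+1}(q)$, rooted at the base vertex. The subgroup generated by $\Sigma_{S_1}$ is the stabiliser of the base vertex in the second factor; by Proposition~\ref{prop:regular-action-prod}(iii) this is $\Lambda_{S_1}$, which is generated by $\Sigma_{S_1}$ since its Cayley graph $\mathsf{DL}_{m_1+1}(q)$ is connected. It remains to prove faithfulness of the action on $\mathcal T_2$. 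This follows because the action of $\Lambda$ on the second factor is injective (Theorem~\ref{thm:affine-arith:prod}(ii)), and an element acting trivially on the rooted cover acts trivially on $\mathsf{DL}_{m_2+1}(q)$. The finiteness properties then follow from Theorem~\ref{thm:Bux}. The group $\Lambda_{S_1} = \mathcal O_{S_1,-1} \rtimes T_{S_1}$ has finite index $q(q-1)$ in $\Gamma_{S_1}$, and $|S_1| = m_1+1$, so $\Lambda_{S_1}$ is $\mathrm{F}_{m_1}$ but not $\mathrm{FP}_{m_1+1}$, by invariance under commensurability \cite{Alonso}. Part (iii) is symmetric: the dual automaton swaps the roles of the two factors, so the same argument yields $\Lambda_{S_2}$ with $|S_2| = m_2+1$.
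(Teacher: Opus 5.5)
Your strategy is the paper's. You use the same lattice $\Lambda_{S_1,S_2}$ with $v_0=v_\infty$ and the same identification of $Q$ and $L$ with halves of $\Sigma_{S_1}$ and $\Sigma_{S_2}$ (this is the paper's map $\varphi$). Then come regularity, faithfulness on each factor, Glasner--Mozes, and Theorem~\ref{thm:Bux} with commensurability invariance. Lifting the action of $\Lambda_{S_1}$ on $\mathsf{DL}_{m_2+1}(q)$ directly to $\mathcal T_2$, instead of going through the presentation $\Lambda_{\mathcal B}$ and the kernel $\Pi_1\times\Pi_2$, is a legitimate shortcut.

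\emph{The computation you left open.} It does close. The placement that works is the paper's relation $q\lambda(q,\ell)=\ell\rho(q,\ell)$, i.e.\ $sr'=rs'$, where $r'$ and $s'$ are the images of $\lambda(q,\ell)$ and $\rho(q,\ell)$.
\begin{itemize}
\item Clearing $(t+b)(t+d)$ in the additive parts gives $x(t+d)+y'(t+a)=y(t+b)+x'(t+c)$.
\item Comparing coefficients gives $x+y'=y+x'$ and $dx+ay'=by+cx'$.
\item The unique solution is exactly $x'=\frac{(a-d)x+(b-a)y}{a-c}$ and $y'=\frac{(c-d)x+(b-c)y}{a-c}$.
\end{itemize}

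Rewritten as $s^{-1}r=r's'^{-1}$, this says the following. With the left-action convention $q(\ell w)=\lambda(q,\ell)\,\rho(q,\ell)(w)$, the state $\sigma_{(a,b)}(x)$ acts as the lift of $s^{-1}$, not of $s$. Your trial $sr=r's'$ is this same convention with $s$ in place of $s^{-1}$. Dually, for $\mathcal B^*$ the letter $\sigma_{(c,d)}(y)$ acts as the lift of $r^{-1}$, because $r^{-1}s=s'r'^{-1}$. Inverting generators does not change the subgroups they generate, so the rest of your argument is unaffected.

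\emph{The rooted tree $L^*$.} It is not $\mathcal T_2$ but only its positive cone: the paths from the base vertex labelled by words in $L$. Its image need not cover $\mathsf{DL}_{m_2+1}(q)$. For instance, when $m_2=1$, positive words only reach multiplicative parts $((t+c)/(t+d))^k$ with $k\ge 0$. So your argument proves faithfulness on $\mathcal T_2$, not on $L^*$. The passage from $\mathcal T_2$ to $L^*$ is exactly the step for which the paper invokes \cite[Prop.~2.12]{GM05}, and you need it too.

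\emph{Part (i) without the lattice.} If you want (i) independently, bijectivity of $(q,\ell)\mapsto(\lambda(q,\ell),\rho(q,\ell))$ requires the determinant $(a-d)(b-c)-(b-a)(c-d)=(a-c)(b-d)$ to be nonzero. This uses $b\neq d$, in addition to the conditions you listed.
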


\begin{proof}
By definition, for all $(a, b) \in E_1$ and $(c, d) \in E_2$, the scalars $a, b, c, d$ are pairwise distinct. This implies that $\mathcal B$ is invertible and reversible. Moreover the determinant of the matrix 
$$\left(\begin{matrix}
a-d  &   b-a  \\
c-d  &  b-c
\end{matrix}
\right)$$
is non-zero. This implies that $\mathcal B$ is bi-reversible. Assertion~(i) follows. 

In order to prove the other assertions, we consider the fundamental group of the   square complex associated with $\mathcal B$ as in \cite{GM05}. It can be defined as the finitely presented group 
$$\Lambda_{\mathcal B} = \langle  Q \sqcup L \mid q\lambda(q, \ell) = \ell \rho(q, \ell) \ \forall (q, \ell ) \in Q \times L\rangle.$$  

Let $\Lambda_{S_1, S_2}$ be the affine arithmetic group afforded by Proposition~\ref{prop:regular-action-prod}, where we define the degree~one valuation $v_0 \not \in S_1 \cup S_2$ required by the definition to be $v_\infty$. The group $\Lambda_{S_1, S_2}$ acts regularly on the vertex set of $\mathsf{DL}_{m+1}(q) \times \mathsf{DL}_{n+1}(q)$. The stabilizer of a vertex in the second (resp. first) factor is isomorphic to $\Lambda_{S_1}$ (resp. $\Lambda_{S_2}$). In particular it has $\mathrm{F}_{m_1}$ but not $\mathrm{FP}_{m_1+1}$ (resp. type $\mathrm{F}_{m_2}$ but not $\mathrm{FP}_{m_2+1}$) by Theorem~\ref{thm:Bux}. 

We now construct a homomorphism $\varphi \colon\Lambda_{\mathcal B} \to \Lambda_{S_1, S_2} \leq K \rtimes K^*$ by setting 
$$\varphi \colon \sigma_{(a, b)}(x) \mapsto \big(\frac x {t+b}, \frac{t+a}{t+b}\big) \in \Lambda_{S_1}$$
for all $(a, b) \in E_1$ and all $x \in F$, and 
$$\varphi \colon \sigma_{(c, d)}(y) \mapsto \big(\frac y {t+d}, \frac{t+c}{t+d}\big) \in \Lambda_{S_2}$$
for all $(c, d) \in E_2$ and all $y \in F$. A direct computation in the affine group $K \rtimes K^*$ shows that those elements respect the defining relations of $\Lambda_{\mathcal B}$. Therefore the assignments above extend to a homomorphism $\varphi \colon\Lambda_{\mathcal B} \to \Lambda_{S_1, S_2}$ as claimed. 

The elements of $\varphi(Q \cup Q^{-1})$ (resp. $\varphi(L \cup L^{-1})$) map the base vertex of the Diestel--Leader graph $\mathsf{DL}_{m_1+1}(q)$ (resp. $\mathsf{DL}_{m_2+1}(q)$) associated with $\Lambda_{S_1}$ (resp. $\Lambda_{S_2}$) to each of its neighbors, see Remark~\ref{rem:explicit-gen-set}. Thus we have $\varphi(\langle Q\rangle) = \Lambda_{S_1}$ and $\varphi(\langle L\rangle) = \Lambda_{S_2}$. 

By Proposition~\ref{prop:regular-action-prod}, we know that the $\Lambda_{S_1, S_2}$-action on $\mathsf{DL}_{m_1+1}(q) \times \mathsf{DL}_{m_2+1}(q)$ is regular on the vertex set. 
Passing to the universal covering trees, we obtain a lattice acting regularly on the vertex set of a product $\mathcal T_1 \times \mathcal T_2$ of two trees, whose defining presentation is precisely that of $\Lambda_{\mathcal B}$, see  \cite[\S 2.6]{GM05}. The tree $\mathcal T_1$ is the Cayley tree of the free group with basis $Q$, and the tree $\mathcal T_2$ is the Cayley tree of the free group with basis $L$.  It follows that  the product $\Pi_1 \times \Pi_2$ of the respective fundamental groups of the graphs $\mathsf{DL}_{m_1+1}(q) $ and $\mathsf{DL}_{m_2+1}(q)$ coincides with the kernel of $\varphi$. 

In view of Theorem~\ref{thm:affine-arith:prod}(ii), the group $\Lambda_{S_1, S_2} $ acts faithfully on each factor of the product $\mathsf{DL}_{m_1+1}(q) \times \mathsf{DL}_{m_2+1}(q)$. This implies that $\Pi_1 \times \{e\}$ (resp. $\{e\} \times \Pi_2$) is the kernel of the $\Lambda_{\mathcal B}$-action on $\mathcal T_2$ (resp. $\mathcal T_1$).

We claim that the image of $\langle Q \rangle $ in $\Aut(\mathcal T_2)$ is isomorphic to $\Lambda_{S_1}$, and the image of $\langle L \rangle$ in $\Aut(\mathcal T_1)$ is isomorphic to $\Lambda_{S_2}$. Indeed, the group $\langle Q \rangle $  is the full stabilizer in $\Lambda_{\mathcal B}$ of a vertex of $\mathcal T_2$. Hence we have $\Pi_1 \leq \langle Q \rangle$. On the other hand $\Pi_2$ acts trivially on $\mathcal T_1$ and freely on $\mathcal T_2$, hence it intersects $\langle Q \rangle$ trivially. Therefore we have $\langle Q \rangle \cap (\Pi_1 \times \Pi_2) = \Pi_1 \times \{e\}$, and we obtain
\begin{align*}
\Lambda_{S_1} 
 & = \varphi(\langle Q \rangle) \\
 &\cong  \langle Q \rangle (\Pi_1 \times \Pi_2)/(\Pi_1 \times \Pi_2) \\
 & \cong  \langle Q \rangle  /\langle Q \rangle  \cap  (\Pi_1 \times \Pi_2)\\
 & = \langle Q \rangle  /\Pi_1\times \{e\}. 
\end{align*}

Since $\Pi_1 \times \{e\}$ is the kernel fo the $\Lambda_{\mathcal B}$-action on $\mathcal T_{2}$, this confirms the claim concerning $\langle Q \rangle $. The proof of the claim for $\langle L \rangle$ is similar. 

In view of the claim, it follows  from  \cite[Prop.~2.12]{GM05} that the automaton group $G_{\mathcal B}$ coincides with $\Lambda_{S_1}$ (resp. $G_{\mathcal B^*}$ coincides with $\Lambda_{S_2}$). This confirms the assertions (ii) and (iii). 
\end{proof}

\begin{remark}
For $m_1 = |S_1|=2$, the group $G_{\mathcal B} \cong \Lambda_{S_1}$ is a lamplighter group by Lemma~\ref{rem:lamplighter}. As mentioned in the introduction, the fact that lamplighter groups can be defined by bi-reversible automata is already known by the work of   \cite{BDR}, \cite{SkSt20} and \cite{Fra23}.
\end{remark}


\bibliographystyle{amsalpha} 
\bibliography{BMW}

\end{document}